\newcommand{\e}{\varepsilon}
\newcommand{\R}{\mathbb{R}}
\renewcommand{\div}{\operatorname{div}}
\newcommand{\curl}{\operatorname{curl}}
\newcommand{\dist}{\operatorname{dist}}
\newcommand{\supp}{\operatorname{supp}}
\renewcommand{\leq}{\leqslant}
\renewcommand{\geq}{\geqslant}
\newtheorem{Theorem}{Theorem}[section]
\newtheorem{Definition}[Theorem]{Definition}
\newtheorem{Proposition}[Theorem]{Proposition}
\newtheorem{Lemma}[Theorem]{Lemma}
\theoremstyle{remark}
\newtheorem{Remark}[Theorem]{Remark}
\newtheorem{Example}[Theorem]{Example}
\numberwithin{equation}{section}
\title{Small moving rigid body into a viscous incompressible fluid} 
\author{Christophe Lacave \& Tak\'eo Takahashi}
\def\adrese{
\begin{description}
\item[C. Lacave:] Univ Paris Diderot, Sorbonne Paris Cit\'e, Institut de Math\'ematiques de Jussieu-Paris Rive Gauche, 
UMR 7586, CNRS, Sorbonne Universit\'es, UPMC Univ Paris 06, F-75013, Paris, France.\\
and\\
Univ. Grenoble Alpes, IF, F-38000 Grenoble, France\\
CNRS, IF, F-38000 Grenoble, France\\
Email: \texttt{christophe.lacave@imj-prg.fr}\\
Web page: \texttt{\url{https://www.imj-prg.fr/~christophe.lacave/}}
\item[T. Takahashi:] Inria, Villers-l\`es-Nancy, F-54600, France\\
and\\
Institut \'Elie Cartan de Lorraine, UMR 7502, Vandoeuvre-l\`es-Nancy, F-54506, France\\
Email: \texttt{takeo.takahashi@inria.fr}\\
Web page: \texttt{\url{http://iecl.univ-lorraine.fr/~Takeo.Takahashi/}}
\end{description}
}
\date{\today}
\begin{document}
\maketitle

\begin{abstract}
We consider a single disk moving under the influence of a 2D viscous fluid and we study the asymptotic as the size of the solid tends to zero.

If the density of the solid is independent of $\varepsilon$, the energy equality is not sufficient to obtain a uniform estimate for the solid velocity. This will be achieved thanks to the optimal $L^p-L^q$ decay estimates of the semigroup associated to the fluid-rigid body system and to a fixed point argument. Next, we will deduce the convergence to the solution of the Navier-Stokes equations in $\R^2$.
\end{abstract}


\section{Introduction}

We study in this paper the asymptotic of a fluid-solid system as the solid is a rigid disk which shrinks to a point.
We first describe the fluid-solid system.

\subsection{The fluid-solid system}

We consider a rigid disk 
$$\mathcal{S}^\varepsilon(t)= \overline{B(h^\varepsilon(t),\varepsilon)}$$
immersed into a viscous incompressible fluid. At time $t>0$, the lagrangian coordinates to the body read 
\begin{equation*}
\eta (t,x):= h^\varepsilon (t)+R_{\theta^\varepsilon (t)}\left(x-h_{0}\right),
\end{equation*}
where for all $t$,
$$
h^\varepsilon(t)\in \mathbb{R}^2 \text{ with } h^\varepsilon(0)=h_{0}, \quad \theta^\varepsilon(t)\in \mathbb{R}, \quad 
R_\theta=\begin{pmatrix}
\cos \theta & -\sin\theta \\ \sin \theta & \cos \theta
\end{pmatrix}.
$$
 The domain of the fluid evolves through the formula
\begin{equation}\label{domaint}
\mathcal{F}^\varepsilon(t)
:=\R^2 \setminus \mathcal{S}^\varepsilon (t).
\end{equation}
We denote by $n:=n^\varepsilon(t,x)$ the exterior unit normal of $\partial \mathcal{F}^\varepsilon(t)$. 
The equations for the fluid-solid system read
\begin{align}
\frac{\partial u^{\varepsilon}}{\partial t}+ (u^{\varepsilon}\cdot\nabla) u^{\varepsilon}-\div \sigma(u^{\varepsilon},p^{\varepsilon})=0 &\quad t>0, \ x\in \mathcal{F}^{\varepsilon}(t), \label{eq01} 
\\
\div u^{\varepsilon} = 0 &\quad t>0, \ x\in \mathcal{F}^{\varepsilon}(t), \label{cou17}\\
\lim_{|x|\to \infty} u^{\varepsilon}(t,x)=0 & \quad t>0, \label{bd cond}\\
u^{\varepsilon}=(h^{\varepsilon})'(t)+ (\theta^{\varepsilon})'(t) (x-h^{\varepsilon}(t))^\perp & \quad t>0, \ x\in \partial \mathcal{S}^{\varepsilon}(t),\\
m^{\varepsilon} (h^\varepsilon)''(t)=-\int_{\partial \mathcal{S}^{\varepsilon}(t)} \sigma(u^{\varepsilon},p^{\varepsilon})n~d\gamma
& \quad t>0,\\
 J^{\varepsilon} (\theta^{\varepsilon})''(t)=-\int_{\partial \mathcal{S}^{\varepsilon}(t)} (x-h^{\varepsilon})^\perp \cdot \sigma(u^{\varepsilon},p^{\varepsilon})n~d\gamma & \quad t>0,\\
 u^{\varepsilon}(0,\cdot)=u_0^\varepsilon & \quad \text{in } \mathcal{F}^{\varepsilon}_0,\\
 h^\varepsilon(0)=h_{0},\quad (h^\varepsilon)'(0)=\ell_{0}^\varepsilon,\quad
 \theta^\varepsilon(0)=0, \quad & (\theta^\varepsilon)'(0)=r_{0}^\varepsilon.
 \label{fsr07}
\end{align}
Here and in what follows
\begin{equation*}
\sigma(u,p)=2\nu D(u)-p I_2,
\end{equation*}
with $\nu>0$ is the constant viscosity and
$$
D(u):=\frac 12 \left((\nabla u) + (\nabla u)^* \right).
$$
We write for any $x \in \mathbb{R}^2$,
\begin{equation*}
x^\perp:=\begin{pmatrix}
-x_2 \\ x_1 
\end{pmatrix}=R_{\pi/2} x.
\end{equation*}

It is convenient to extend the velocity field $u^{\varepsilon}$ inside the rigid disk as follows:
\begin{equation}\label{extension}
 u^{\varepsilon}(t,x)=(h^{\varepsilon})'(t)+ (\theta^{\varepsilon})'(t) (x-h^{\varepsilon}(t))^\perp 
 \quad t>0, \ x\in \mathcal{S}^{\varepsilon}(t).
\end{equation}
 
To apply the result in \cite{EHL}, we also need to assume that the center of the mass corresponds with the center of the disk. For simplicity, let us assume that the density $\rho^\varepsilon>0$ is constant in the disk. We define a global density in $\R^2$ by
 $$
 \rho^\varepsilon(t,x)=\left\{\begin{array}{ll}
1 & x\in \mathcal{F}^{\varepsilon}(t),\\
\rho^\varepsilon & x \in \mathcal{S}^{\varepsilon}(t).
\end{array}
 \right.
 \quad t\geq 0.
 $$

For any smooth open set $\mathcal{O}$, we define
\begin{itemize}
\item $V (\mathcal{O}):= \Bigl\{ \varphi \in C_0^\infty(\mathcal{O}) \: \vert \: \div \varphi = 0 \text{ in } \mathcal{O} \Bigl\}$;
\item $\mathcal{H}(\mathcal{O})$ the closure of $V (\mathcal{O})$ in the norm $L^2$:
\[
\mathcal{H}(\mathcal{O}) = \Bigl\{ \varphi \in L^2(\mathcal{O}) \: \vert \: \div \varphi = 0 \text{ in } \mathcal{O},\: \varphi\cdot n =0 \text{ at } \partial \mathcal{O} \Bigl\} ;
\]
\item $\mathcal{V}(\mathcal{O})$ the closure of $V (\mathcal{O})$ in the norm $H^1$:
\[
\mathcal{V}(\mathcal{O}) = \Bigl\{ \varphi \in H^1_{0}(\mathcal{O}) \: \vert \: \div \varphi = 0 \text{ in } \mathcal{O}\Bigl\}
\]
and its dual space by $\mathcal{V}'(\mathcal{O})$ with respect to $\mathcal{H}(\mathcal{O})$.
\end{itemize}
We also define $\mathcal{V}_R(\mathcal{F}^{\varepsilon}(t))$ the subspace of $\mathcal{V}(\R^2)$ of velocity fields that are rigid in the solid:
\begin{equation}\label{fsr09}
\mathcal{V}_R(\mathcal{F}^{\varepsilon}(t)):=\left\{ \varphi \in H^1(\R^2) \ ; \ D(\varphi)=0 \ \text{in} \ \mathcal{S}^{\varepsilon}(t) , \quad \div \varphi = 0 \right\}.
\end{equation}

Under the following hypotheses on the initial conditions
\begin{equation}\label{compatibility}
\begin{split}
 u^{\varepsilon}_0\in L^2(\mathcal{F}^\varepsilon_{0}), \quad \div u^{\varepsilon}_0=0, \quad
u^{\varepsilon}_0\cdot n= \ell_{0}^\varepsilon \cdot n \ 
\text{on} \ \partial \mathcal{S}^\varepsilon_{0},
\end{split}
\end{equation}
there exists a unique global weak solution $(u^{\varepsilon},\ h^\varepsilon, \ \theta^\varepsilon)$ see \cite{Takahashi&Tucsnak04}, in the sense of the definition below.

\begin{Definition}\label{D01}
We say that $(u^{\varepsilon},\ h^\varepsilon,\ \theta^\varepsilon)$ is a global weak solution of \eqref{eq01}--\eqref{fsr07} if, for any $T>0$, we have
\begin{equation*}
u^{\varepsilon} \in L^\infty(0,T;L^2(\R^2))\cap L^2(0,T;H^1(\R^2)), \quad u^{\varepsilon}(t,\cdot) \in \mathcal{V}_R(\mathcal{F}^{\varepsilon}(t)),
\end{equation*}
and if it satisfies the weak formulation
\begin{equation}\label{fsr08}
-\int_0^T \int_{\R^2} \rho^{\varepsilon} u^{\varepsilon} \cdot \left(\frac{\partial \varphi^{\varepsilon}}{\partial t} + (u^{\varepsilon} \cdot \nabla)\varphi^{\varepsilon} \right) \, dx ds
+ 2\nu \int_0^T \int_{\R^2} D(u^{\varepsilon}): D(\varphi^{\varepsilon}) \, dxds = \int_{\R^2} \rho^{\varepsilon} u^{\varepsilon}_0(x) \cdot \varphi^{\varepsilon}(0,x) \, dx,
\end{equation}
for any $\varphi^\varepsilon \in C^1_c([0,T);H^1(\R^2))$ such that $\varphi^{\varepsilon}(t,\cdot) \in \mathcal{V}_R(\mathcal{F}^{\varepsilon}(t))$.
\end{Definition}

\subsection{Massless pointwise particle in the whole plane}

When $\varepsilon\to 0$, we establish the convergence of $u^\varepsilon$ to the unique solution of the Navier-Stokes equations in the whole plane $\R^2$.

The asymptotic behavior of the fluid motion around shrinking obstacles is already considered in several recent papers. Iftimie, Lopes Filho and Nussenzveig Lopes \cite{IftLopNus2006a} have studied the case of one small fixed obstacle in an incompressible viscous fluid in 2D. Iftimie and Kelliher \cite{IftKel2009a} have treated the same situation in 3D. In \cite{Lac2009c, Lac2015b} Lacave has considered the case of one thin obstacle shrinking to a curve in 2D and 3D.

There is also a large literature about porous medium in the homogenization framework. Since the pioneer work of Cioranescu and Murat \cite{CM82} for the Laplace problem, the Navier-Stokes system was studied, in particular, by Allaire \cite{Allaire90a,Allaire90b}. We also mention \cite{conca1,conca2,Mikelic91,SP1,SP2,Tartar80} for the fluid motion through a perforated domain.

In all the above studies, the general strategy relies on energy estimate to get a uniform estimate in $H^1$. It turns out that such an estimate is sufficient to pass to the limit in the weak formulation by a troncature procedure. Namely, for a test function $\varphi\in \mathcal{D}(\Omega)$ and for a cutoff function $\chi^\e$, we note that $\chi^\e \varphi$ is an admissible test function for the Laplace problem in the perforated domain $\Omega^\varepsilon$. If the inclusions are far enough, for standard cutoff function, $\| \chi^\e\|_{W^{1,p}}$ remains bounded only for $p\leq 2$ in dimension two, which allows to pass to the limit in terms like $\int \nabla u^\e : \nabla(\chi^\e \varphi)$. For the Navier-Stokes equations, the cutoff procedure is more complicated and relies on Bogovski{\u\i} operators. Indeed, we need approximated test functions that are divergence free (see \cite{Allaire90a,Lac2015b} and Section~\ref{sec:cutoff}).

When the obstacles can move under the influence of the fluid, we also need to control uniformly the velocities of the solids. 
In the case of the system \eqref{eq01}--\eqref{fsr07} or more generally in the case of a system with several rigid bodies moving into a viscous incompressible fluid, this control can be obtained from the energy estimate if the masses are independent of $\varepsilon$ (see Section~\ref{sec:massive} for details). 

If the masses tend to zero, it is no more possible to deduce estimates of the velocities of the solids independently of $\varepsilon$ from the energy estimate.
One could try to get an estimate of rigid velocities from the boundary condition. However, since the size of the solids tend to zero, this leads to look for a $C^0$-estimate for the fluid velocity, and thus for $H^{s}$ estimates with $s> 1$. It was the strategy followed in \cite{DasRob2011b,SilTak2014b} with a $H^2$ analysis. Unfortunately, these articles are based on uniform elliptic estimates in the exterior of a small obstacle which fail for $s>1$ (see a counter-example related to these estimates in \cite{CPRXpreprint}). 

Our strategy is different here. Our basic remark is that the small obstacle problem is related to the long-time behavior though the scaling property of the Navier-Stokes equations $u^{\varepsilon}(t,x)=\varepsilon^{-1} u^1(\varepsilon^{-2} t, \varepsilon^{-1} x)$. For one disk moving in the plane, the long-time behavior has been recently studied by Ervedoza, Hillairet and Lacave in \cite{EHL}. In particular, they have obtained the optimal decay estimates of the Stokes semigroup, i.e. with the rates corresponding to the heat kernel (which are invariant to the parabolic scaling). These estimates are the key to treat the massless pointwise particle.

The goal of the main theorem is to treat the case where the disk shrinks to a {\it massless pointwise particle}:
\begin{equation}\label{massless}
 \rho^\varepsilon =\rho_{0}
\end{equation}
hence,
\begin{equation}\label{massless2}
m^\varepsilon = \varepsilon^2 m^1 \quad \text{and} \quad J^\varepsilon= \varepsilon^{4} J^1.
\end{equation}

We consider the massless case for small data:
\begin{Theorem}\label{mainthm2}
Assume \eqref{massless}. Then there exists $\lambda_0$ such that the following holds.

Let $(u_0^{\varepsilon}, \ell_{0}^\varepsilon,r_{0}^\varepsilon)$ be a family in $L^2(\mathcal{F}^\varepsilon_{0})\times \R^2 \times \R$ verifying \eqref{compatibility} and such that
\begin{equation}\label{petitesse}
 \varepsilon |\ell_{0}^{\varepsilon}|,\ \varepsilon^{2} |r_{0}^{\varepsilon}|,\ \| u_0^{\varepsilon} \|_{L^2(\mathcal{F}^\varepsilon_{0})} \leq \lambda_0
\end{equation}
and
\begin{equation}\label{fsr28}
u_0^{\varepsilon} \rightharpoonup u_0 \quad \text{in} \quad L^2(\R^2).
\end{equation}
Then for any $T>0$ we have
\begin{equation}\label{fsr152}
u^{\varepsilon} \overset{*}{\rightharpoonup} u \quad \quad \text{ in } \quad L^\infty(0,T;L^2(\R^2))\cap L^2(0,T;H^1(\R^2)) 
\end{equation}
where $u$ is the weak solution of the Navier-Stokes equations in $\R^2$ associated to $u_0$: for any $\varphi\in C^1_c([0,T);\mathcal{V}(\R^2)),$
\begin{equation*}
-\int_0^T \int_{\R^2} u \cdot \left(\frac{\partial \varphi}{\partial t} + (u \cdot \nabla)\varphi \right) \, dx ds
+\nu \int_0^T \int_{\R^2} \nabla u: \nabla \varphi \, dx ds= \int_{\R^2} u_0(x) \cdot \varphi(0,x) \, dx.
\end{equation*}
\end{Theorem}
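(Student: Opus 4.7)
The plan is to combine a scaling argument with the optimal $L^p$--$L^q$ decay estimates of the fluid--structure Stokes semigroup from \cite{EHL} so as to control the rigid velocities uniformly in $\e$, and then to pass to the limit in the weak formulation \eqref{fsr08} by the Bogovski{\u\i}-truncation scheme alluded to in the introduction. The starting point is the scaling invariance of the problem,
$$
u^\e(t,x) = \e^{-1}\, U\bigl(\e^{-2}t,\, \e^{-1}(x - h^\e(t))\bigr),
$$
which conjugates \eqref{eq01}--\eqref{fsr07} (disk of radius $\e$, density $\rho_0$) to the same problem posed around a \emph{unit} disk of density $\rho_0$. In dimension two the $L^2$ norm of the fluid velocity is scale-invariant, and the rigid velocities satisfy $\ell^1(0)=\e\ell_0^\e$, $r^1(0)=\e^2 r_0^\e$, so the smallness hypothesis \eqref{petitesse} translates into an $\e$-independent bound of size $\lambda_0$ on the initial data of the unit problem.

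For the unit problem I would write the Duhamel formula with the fluid--structure Stokes semigroup constructed in \cite{EHL}, whose $L^p$--$L^q$ decay rates match those of the heat kernel and are therefore compatible with the parabolic scaling. A contraction-mapping argument in a space whose norms encode these decay rates (for both the fluid part $U$ and the rigid velocities $\ell, r$) yields, for $\lambda_0$ sufficiently small, a unique global mild solution with uniform-in-time control of $(U,\ell,r)$ and integrable decay of $|\ell|$. By uniqueness for the small-data problem, this is precisely the weak solution in the sense of Definition~\ref{D01} for the unit system. Unscaling gives the $\e$-uniform bounds
$$
\|u^\e\|_{L^\infty(0,T;L^2(\R^2))} + \|u^\e\|_{L^2(0,T;H^1(\R^2))} + \sup_{t\in[0,T]}\bigl(\e|\ell^\e(t)| + \e^2|r^\e(t)|\bigr) \le C\lambda_0,
$$
together with $\sup_{t\in[0,T]}|h^\e(t)-h_0| \to 0$. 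Hence the disk remains in a fixed compact subset of $\R^2$, and along a subsequence $u^\e \stackrel{*}{\rightharpoonup} u$ in the spaces of \eqref{fsr152}.

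To identify $u$ I fix $\varphi \in C^1_c([0,T);\mathcal{V}(\R^2))$ and build $\varphi^\e(t,\cdot) \in \mathcal{V}_R(\mathcal{F}^\e(t))$ of the form $\varphi^\e = \chi^\e\varphi + w^\e$, where $\chi^\e$ is a radial cutoff equal to $0$ on $\mathcal{S}^\e(t)$ and to $1$ outside $B(h^\e(t),\sqrt{\e})$, and $w^\e$ is the Bogovski{\u\i} correction on the annulus restoring $\div\varphi^\e = 0$. Classical Bogovski{\u\i} estimates give $\varphi^\e\to\varphi$ in $L^\infty(0,T;W^{1,p}(\R^2))$ for every $p<2$, with $\varphi^\e-\varphi$ supported on a set of area $O(\e)$. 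Plugging $\varphi^\e$ in \eqref{fsr08}, the contributions carried by $\mathcal{S}^\e$ vanish because $|\mathcal{S}^\e|=\pi\e^2$ combined with the uniform bounds above; the viscous term converges since $\nabla u^\e \rightharpoonup \nabla u$ weakly and $\nabla\varphi^\e \to \nabla\varphi$ strongly in the dual space; and the nonlinear term is handled by Aubin--Lions compactness of $u^\e$ in $L^2_{\loc}((0,T)\times\R^2)$ together with the uniform 2D Ladyzhenskaya bound on $u^\e$ in $L^4((0,T)\times\R^2)$. Uniqueness of 2D Leray solutions then upgrades the subsequential convergence to the full limit \eqref{fsr152}.

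The main obstacle is the second step: the energy identity yields only $\sqrt{m^\e}|\ell^\e|\le C$, i.e.\ $\e|\ell^\e|\le C$, so \emph{a priori} the solid velocity may grow like $1/\e$ and push the disk out of any fixed compact in arbitrarily short times, which would ruin every argument below. The $L^p$--$L^q$ semigroup decay of \cite{EHL}, exploited at the rescaled (unit) level and closed by a fixed point, provides the time decay of the rigid velocities that makes $|h^\e(t)-h_0|\to 0$ uniformly on $[0,T]$; this is why the smallness assumption \eqref{petitesse} is imposed. The Bogovski{\u\i}-truncation step, though technical, is by now standard and follows the lines of \cite{Allaire90a,Lac2015b}.
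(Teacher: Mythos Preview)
Your overall architecture matches the paper's: use the scale-invariance of the $L^p$--$L^q$ estimates from \cite{EHL} together with a Kato-type fixed point to get an $\e$-uniform bound $|\ell^\e(t)|\le \mu_0 t^{-1/2}$, then truncate test functions near the disk via a Bogovski{\u\i} correction and pass to the limit. The differences, however, are not cosmetic.

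\medskip
\textbf{The claim $\sup_{t\in[0,T]}|h^\e(t)-h_0|\to 0$ is false.} The decay you can extract from \cite{EHL} at the unit level is $|L(\tau)|\le C\tau^{-1/2}$, which is \emph{not} integrable on $(0,\infty)$. Unscaling gives $|\ell^\e(t)|\le \mu_0 t^{-1/2}$ (exactly as in the paper's Proposition~\ref{P01}), hence $|h^\e(t)-h_0|\le 2\mu_0\sqrt{t}$. This is an $\e$-independent bound, but it does \emph{not} tend to zero with $\e$. What the paper actually obtains is that $(h^\e)$ is bounded in $W^{1,q}(0,T)$ for $q\in(1,2)$, and therefore, up to a subsequence, $h^\e\to h$ uniformly on $[0,T]$ for some limiting trajectory $h$. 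Your truncation must be centred on this unknown $h(t)$, not on $h_0$.

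\medskip
\textbf{The single-scale truncation does not close on the nonlinear term.} You take $\varphi^\e$ with cutoff at scale $\sqrt{\e}$ around $h^\e(t)$ and invoke ``Aubin--Lions compactness of $u^\e$ in $L^2_{\mathrm{loc}}$''. But the weak formulation \eqref{fsr08} only bounds $\langle\partial_t u^\e,\psi\rangle$ for $\psi\in\mathcal{V}_R(\mathcal{F}^\e(t))$, so you get a time-derivative estimate (and hence Aubin--Lions) only on open sets that avoid $\mathcal{S}^\e(t)$ for all small $\e$, i.e.\ only away from the limiting curve $t\mapsto h(t)$. Meanwhile $\nabla\varphi^\e-\nabla\varphi$ is bounded in $L^2$ but \emph{not} strongly convergent there (it concentrates precisely near $h^\e(t)\to h(t)$), and $u^\e\otimes u^\e$ is only bounded in $L^2((0,T)\times\R^2)$. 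So neither factor in $\int (u^\e\otimes u^\e):\nabla\varphi^\e$ converges strongly on the region that matters. The paper resolves this by a two-parameter scheme: first fix an $\eta>0$ and build $\varphi^\eta$ (independent of $\e$) vanishing on $B(h(t),\eta/2)$; work on finitely many open sets $\mathcal{O}_j$ at distance $\sim\eta$ from the trajectory; apply the Helmholtz--Weyl decomposition $u^\e=\mathbb{P}_{\mathcal{O}_j}u^\e+\nabla q^\e$ and obtain Aubin--Lions compactness of $\mathbb{P}_{\mathcal{O}_j}u^\e$ in $L^2(t_j,t_{j+1};L^4(\mathcal{O}_j))$; the $\nabla q^\e\otimes\nabla q^\e$ piece is killed by the algebraic identity $\int(\nabla q\otimes\nabla q):\nabla\varphi^\eta=0$ for harmonic $q$. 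Only after $\e\to 0$ do they let $\eta\to 0$ using Proposition~\ref{lem:bogo}. Your sketch lacks both the decoupling of scales and the Helmholtz--Weyl step, and as written the limit in the nonlinear term is not justified.
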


\begin{Remark}
For any $T>0$, we will actually establish in Section~\ref{sec:conv} that, up to a subsequence, we have
\begin{equation*}
h^{\varepsilon} \to h \quad \text{uniformly in} \ [0,T],
\end{equation*}
and in Section~\ref{sec:limit} that, for any $\mathcal{O} \Subset \R^2 \setminus \{ h(t) \}$ (for all $t\in (t_{1},t_{2})$ with some $t_{1},t_{2}\in [0,T]$), we have
\[
\mathbb{P}_{\mathcal{O}} u^\varepsilon \to \mathbb{P}_{\mathcal{O}} u \text{ strongly in } L^2(t_{1},t_{2};L^4(\mathcal{O})),
\]
where $\mathbb{P}_{\mathcal{O}}$ is the Leray projector. This strong limit will be used to pass to the limit in the non-linear term.

As we recover at the limit the weak solution of the Navier-Stokes equations in the whole plane, and as this solution is unique by the Leray theorem, we will deduce that we do not need to extract a subsequence in \eqref{fsr152}.
\end{Remark}

For a 2D ideal incompressible fluid governed by the Euler equations, the case of a massive pointwise particle (i.e. where $m^\varepsilon=m^1$ is independent of $\varepsilon$) in the whole plane was treated in \cite{GLS1}, a massless pointwise particle in the whole plane in \cite{GLS2} and both case in a bounded domain in \cite{GMS}. In these works, non-trivial limit was obtained (namely, Kutta-Joukowski lift force or vortex-wave system) when we consider non-zero initial circulations around the small solids.

\subsection{Plan of the paper}

The remainder of this work is organized in four sections.

In the next section, we provide three examples where the initial convergence \eqref{fsr28} holds.

We establish in Section~\ref{sec:est} some uniform estimates on $(u^\varepsilon,(h^\varepsilon)')$. The energy estimate will give us directly a good estimate for the fluid velocity $u^\varepsilon$ but not for the disk velocity $(h^\varepsilon)'$. Thanks to the results of \cite{EHL}, we will prove that some $L^p-L^q$ estimates of the Stokes semigroup are independent of $\varepsilon$, and by a fixed point argument we will get a uniform estimate of the disk velocity.

Section~\ref{sec:3} is dedicated to the passing to the limit. We introduce the cutoff procedure which follows the trajectory of the solid. A crucial point is to construct a corrected test function $\varphi^\eta$ which satisfies the divergence free condition. This will be obtained by the Bogovski{\u\i} operator \cite{Bogovskii79,Bogovskii80}. Then we follow the analysis developed in \cite{Lac2015b}. Roughly, we pass first to the limit $\varepsilon\to 0$ far away from the solid to get that $u$ satisfies the Navier-Stokes equations in this region. Next, we pass to the limit $\eta\to 0$ in the cutoff function, to prove that the equations are also verified in the vicinity of the massless pointwise particle.

We will discuss in the last section the three dimensional case and we will give the extension of our main result in the case where several solids (with any shapes) tend to massive pointwise particles in bounded domains. In this case the energy estimate is sufficient to obtain a uniform estimate of the solid velocities, which allows us to reach more general geometric configurations than in the massless case.

\section{Examples of initial conditions}

In this paragraph, we develop three examples of family $(u_{0}^\varepsilon, \ell_{0}^\varepsilon)_{\varepsilon}$ satisfying the compatibility condition \eqref{compatibility} which converges in $L^2(\R^2)$.

\begin{Example}
The first trivial example is the case where $u_{0}^\varepsilon$ is independent of $\varepsilon$. Namely, let us consider $(u_{0}^{\varepsilon_{0}}, \ell_{0}^{\varepsilon_{0}},r_{0}^{\varepsilon_{0}})$ satisfying \eqref{compatibility} in $\mathcal{F}^{\varepsilon_{0}}_{0}$ for some $\varepsilon_{0}>0$. The extension of $u_0^{\varepsilon_{0}}$ by $\ell_{0}^{\varepsilon_{0}} + r_{0}^{\varepsilon_{0}} (x-h_{0})^\perp$ inside the disk verifies also \eqref{compatibility} in $\mathcal{F}^{\varepsilon}_{0}$ for any $\varepsilon\in (0,\varepsilon_{0}]$.
\end{Example}

\begin{Example}
In domains depending on $\varepsilon$, a standard setting is to give an initial data in terms of an independent vorticity $\omega_{0}=\curl u_{0}^\varepsilon$ and initial circulation $\gamma_{0}=\oint_{\partial B(h_{0},\varepsilon)} u_{0}^\varepsilon \cdot \tau \, ds$ (see, e.g., \cite{GLS1,GLS2,IftKel2009a,IftLopNus2006a,Lac2009c,Lac2015b}). For the 2D Euler equations, the vorticity is the natural quantity because it satisfies a transport equation, which implies some conserved properties (e.g. the $L^p$ norm of the vorticity for $p\in [1,\infty]$ and the circulation of the velocity).

For the 2D Navier-Stokes equations in domains with fixed boundaries, the vorticity and the circulation are less relevant, because the Dirichlet boundary condition implies that the circulation is zero for $t>0$, and the vorticity equation does not give anymore the conservation of the $L^p$ norm. For these equations, the standard framework is related to the energy estimate \eqref{fsr10}, i.e. to consider initial data belonging to $L^2 (\mathcal{F}^\e_{0})$. In terms of the vorticity, we recall that $u_{0}^\e(x) = (\gamma_{0}+ \int_{\mathcal{F}^\e_{0}} \omega_{0})\frac{x^\perp}{2\pi |x|^2} + \mathcal{O}(\frac1{|x|^2})$ at infinity (see for instance \cite[Section 2]{GLS1} or \cite[Section 3]{IftLopNus2006a}), hence
\[
u_{0}^\e \in L^2 (\mathcal{F}^\e_{0}) \Longleftrightarrow \gamma_{0}+ \int_{\mathcal{F}^\e_{0}} \omega_{0}=0.
\]
For these reasons, the most natural condition is $\gamma_{0}=\int \omega_{0} =0$. In this case, it is easy to prove the following result.
\end{Example}

\begin{Lemma}
Let $(\ell_{0},r_{0})\in \mathbb{R}^3$, $\omega_0\in L^\infty_{c}(\R^2\setminus\{ 0\})$ fixed such that $\int \omega_{0}=0$. Then, for $\e$ small enough such that ${\rm supp \ }\omega_{0}\cap B(0,\e)=\emptyset$, we have a unique solution $u_{0}^\e$ in $L^2(\mathcal{F}^\e_0)$ of 
\begin{gather*}
 \div u^\e_0=0 \text{ in } \mathcal{F}^\e_0, \quad \curl u^\e_0 = \omega_0 \text{ in } \mathcal{F}^\e_0, \quad \lim_{ |x| \to \infty} u_0^\e(x) =0 ,\\
u_0^\e \cdot n = \ell_{0} \cdot n \text{ on } \partial B(0,\e), \quad \oint_{\partial B(0,\e)} u_0^\e\cdot \tau \, ds=0 .
\end{gather*}
Moreover, extending $u_{0}^\e$ by $\ell_{0}+r_{0}x^\perp$ in $B(0,\e)$ we have
\begin{equation*}
 u_0^\e \rightharpoonup u_0 \quad \text{ weakly in } L^2(\R^2),
\end{equation*}
where $u_{0}=K_{\R^2}[\omega_{0}]=\frac{x^\perp}{2\pi|x|^2} * \omega_{0} $ is the unique vector field in $L^2(\R^2)$ such that
\[
\div u_0=0 \text{ in } \R^2, \quad \curl u_0 = \omega_0 \text{ in } \R^2, \quad \lim_{|x|\to \infty }u_0(x)=0.
\]
\end{Lemma}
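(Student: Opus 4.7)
The plan is to reduce the construction of $u_0^\e$ to an exterior Neumann problem and then to compare the resulting field with $u_0 = K_{\R^2}[\omega_0]$ directly.

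I would look for $u_0^\e$ in the form $u_0^\e = K_{\R^2}[\omega_0] + v^\e$ on $\mathcal{F}_0^\e$. The Biot--Savart field $K_{\R^2}[\omega_0]$ is divergence free on $\R^2$, has curl $\omega_0$ on $\R^2$, and belongs to $L^2(\R^2)$ (the improved decay $O(|x|^{-2})$ at infinity following from $\int\omega_0=0$, as recalled just above the lemma). Hence $v^\e$ must be div- and curl-free in $\mathcal{F}_0^\e$, decay at infinity, satisfy $v^\e\cdot n = (\ell_0 - K_{\R^2}[\omega_0])\cdot n$ on $\partial B(0,\e)$, and carry the circulation $-\oint_{\partial B(0,\e)} K_{\R^2}[\omega_0]\cdot\tau\, ds$. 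Since $\supp\omega_0\cap B(0,\e)=\emptyset$ for small $\e$, Stokes' theorem gives
\[
\oint_{\partial B(0,\e)} K_{\R^2}[\omega_0]\cdot\tau\,ds = \int_{B(0,\e)}\omega_0\,dx = 0,
\]
so the required circulation of $v^\e$ is zero. A div- and curl-free $L^2$ field on the exterior of a disk with zero circulation around the hole is globally a gradient, so $v^\e = \nabla\psi^\e$ with $\psi^\e$ harmonic in $\mathcal{F}_0^\e$. The Neumann datum $g^\e := (\ell_0 - K_{\R^2}[\omega_0])\cdot n$ has zero mean on $\partial B(0,\e)$ (by the divergence theorem applied separately to $\ell_0$ and to $K_{\R^2}[\omega_0]$), so classical exterior Neumann theory provides a unique $\nabla\psi^\e\in L^2(\mathcal{F}_0^\e)$. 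Uniqueness of $u_0^\e$ itself reduces to the same observation: any $L^2$ div- and curl-free field with zero normal trace and zero circulation is a gradient of a harmonic function, and the $L^2$ condition excludes a $\log$ term at infinity, forcing the harmonic function to be constant under zero Neumann data.

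For the convergence, denote still by $u_0^\e$ the extension by $\ell_0 + r_0 x^\perp$ inside the disk. Then
\[
u_0^\e - u_0 = \nabla\psi^\e\,\mathbf{1}_{\mathcal{F}_0^\e} + \bigl(\ell_0 + r_0 x^\perp - u_0\bigr)\mathbf{1}_{B(0,\e)}.
\]
Since $\omega_0$ vanishes in a neighbourhood of $0$, $u_0$ is smooth there, so the second term has $L^2(\R^2)$ norm $O(\e)$. For the first, the rescaling $y = x/\e$ turns $\psi^\e$ into $\tilde\psi^\e(y):=\psi^\e(\e y)$, which solves an exterior Neumann problem on $\{|y|>1\}$ with boundary data $\e g^\e(\e y)$; the latter is $O(\e)$ in $L^\infty$ because $K_{\R^2}[\omega_0]$ is bounded near $0$. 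The standard $L^2$ estimate for the exterior Neumann problem gives $\|\nabla\tilde\psi^\e\|_{L^2(\{|y|>1\})} = O(\e)$, and scale invariance of the $2$-D Dirichlet integral under $y = x/\e$ shows that $\|\nabla\psi^\e\|_{L^2(\mathcal{F}_0^\e)}$ equals this rescaled norm, hence is also $O(\e)$. Combining the two estimates, $u_0^\e\to u_0$ strongly in $L^2(\R^2)$, which is in particular the claimed weak convergence.

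The only point requiring a bit of care is the $O(\e)$ estimate for $\|\nabla\psi^\e\|_{L^2}$; a fully explicit alternative to the rescaling argument is to expand $\psi^\e$ in polar Fourier modes $\sum_{n\geq 1} r^{-n}(a_n\cos n\theta + b_n\sin n\theta)$, match $\partial_r\psi^\e = g^\e$ at $r = \e$, and compute the Dirichlet integral via Parseval, which directly yields $\|\nabla\psi^\e\|_{L^2(\mathcal{F}_0^\e)}^2 = O(\e^2)$ under the uniform boundedness of $g^\e$.
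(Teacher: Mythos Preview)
Your argument is correct and in fact proves strong convergence $u_0^\e\to u_0$ in $L^2(\R^2)$, which is more than the weak convergence stated. The paper proceeds differently: it writes down the explicit Biot--Savart formula for the exterior of the disk via the image method,
\[
u_0^\e(x)=K_{\R^2}[\omega_0](x)+\frac1{2\pi}\int\Big(\frac{x}{|x|^2}-\frac{x-\e^2 y^*}{|x-\e^2 y^*|^2}\Big)^\perp\omega_0(y)\,dy-\e^2\sum_{j=1}^2(\ell_0)_j\nabla\Big(\frac{x_j}{|x|^2}\Big),
\]
bounds the last term by $C\e|\ell_0|$ in $L^2$, and for the image term cites \cite{IftLopNus2006a} to get a uniform $L^2$ bound together with convergence to zero in $\mathcal{D}'$, hence only weak $L^2$ convergence. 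Your decomposition $u_0^\e=u_0+\nabla\psi^\e$ with $\psi^\e$ solving an exterior Neumann problem, followed by rescaling to the fixed domain $\{|y|>1\}$ and the elementary Dirichlet--energy estimate (your Fourier computation makes this explicit), is more self-contained and yields the quantitative bound $\|\nabla\psi^\e\|_{L^2}=O(\e)$. The trade-off is that the paper's formula is completely explicit and plugs directly into the standard references on the small-obstacle problem, whereas your potential-theoretic route avoids the image construction, does not rely on an external lemma, and would adapt more readily to obstacles other than disks.
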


\begin{proof} The existence and uniqueness of $u_{0}^\e$ is well-known (see e.g. \cite[Section 2]{GLS1}):
\[
u_{0}^\e(x)= \frac1{2\pi} \int_{B(0,\e)^c} \frac{(x-y)^\perp}{|x-y|^2} \omega_{0}(y)dy + \frac1{2\pi} \int_{B(0,\e)^c} \Big( \frac{x}{|x|^2}- \frac{x-\e^2 y^*}{|x- \e^2 y^* |^2}\Big)^\perp \omega_{0}(y)dy - \e^2 \sum_{j=1}^2 (\ell_{0})_{j} \nabla\Big( \frac{x_{j}}{|x|^2} \Big)
\]
with the notation $y^*=y/|y|^2$. By a standard computation, we note that
\[
\Big\| \e^2 \sum_{j=1}^2 (\ell_{0})_{j} \nabla\Big( \frac{x_{j}}{|x|^2} \Big) \Big\|_{L^2(\mathcal{F}^\e_{0})} \leq C \e | \ell_{0}|.
\]

It is also rather classical to prove that the second integral in the right hand side tends to zero as $\e\to 0$. For instance, the authors establish in \cite[Lemmas 7 and 10]{IftLopNus2006a} the uniform estimate in $L^2(\R^2)$ and the convergence to zero in $\mathcal{D}'(\R^2)$, which implies the weak convergence in $L^2(\R^2)$.
 
As $\frac1{2\pi} \int_{B(0,\e)^c} \frac{(x-y)^\perp}{|x-y|^2} \omega_{0}(y)dy = K_{\R^2}[\omega]=u_{0}$, this ends the proof.
\end{proof}

\begin{Remark}
Even if the zero circulation condition is mandatory for strong solutions to the Navier-Stokes equations in fixed domain, for the fluid-solid system the no-slip boundary condition would imply
$$\oint_{\partial B(h_{0},\varepsilon)} u_{0}^\varepsilon \cdot \tau \, ds = 2\pi \varepsilon^2 r_{0}^\varepsilon.$$
Hence, an interesting extension could be to study the case of non zero initial circulation $\gamma_{0}^\varepsilon$. If we assume that $\gamma_{0}^\varepsilon$ is independent of $\varepsilon$, some singular terms appear at the limit of the form $\gamma_{0}\frac{(x-h_{0})^\perp}{2\pi |x- h_{0}|^2}$, which does not belong to $L^2_{\rm loc}(\R^2)$, but only to $L^p_{\rm loc}(\R^2)$ for all $p\in [1,2)$. Another difficulty in this case is to ensure that $\gamma_{0}^\varepsilon+ \int_{\mathcal{F}^\e_{0}} \omega_{0}=0$ for any $\varepsilon$, in order to state that the initial velocity is square integrable at infinity. A possibility could be to chose $\gamma_{0}^\varepsilon= \int_{B(h_{0},\varepsilon)}\omega_{0}$, i.e. $r_{0}^\varepsilon=\fint_{B(h_{0},\varepsilon)}\omega_{0}$. Without this condition, $u^\varepsilon_{0}$ belongs only to $L^p(\mathcal{F}_{0}^\varepsilon)$ for all $p\in (2,\infty]$. 

Therefore, a circulation independent of $\varepsilon$ and a vorticity with non zero mean value would require to work in the Marcinkiewicz space $L^{2,\infty}(\mathcal{F}_{0}^\varepsilon)$ (weak $L^2$ space). Even if this space is less classical that $L^2$, there is a large literature for the well-posedness of the Navier-Stokes equations in fixed domain, because it corresponds to relevant initial data for the Euler equations, and also because the self-similar solutions (as the Lamb-Oseen vortex) in the whole plane belongs to $L^{2,\infty}$. In this case, the Cauchy theory is well-known, and Iftimie, Lopes Filho and Nussenzveig Lopes managed in \cite{IftLopNus2006a} to consider the small obstacle problem with non-zero initial circulation and initial vorticity with non-zero mean value. For the fluid-solid problem, such a Cauchy theory is not yet established. As the optimal decay estimates for the Stokes semigroup are now known in $L^p-L^q$ \cite{EHL}, we guess that it would be possible to extend it by interpolation to Marcinkiewicz spaces, and then to prove a well-posedness result for the full fluid-solid system. Such an extension would require more work and could be interesting, but the main goal of this article is to stay in the standard framework for the fluid solid problem and to treat the same question as \cite{DasRob2011b,SilTak2014b}.
\end{Remark}

\begin{Example}
Another example of initial conditions satisfying \eqref{compatibility} can be obtained by truncating a stream function associated to a vector 
field $u_{0}$ defined on $\R^2$.

Let us consider $u_{0}=\nabla^\perp \psi_{0} \in L^2(\R^2)$ such that
\[
\div u_0=0 \quad \text{in} \ \R^2, \quad \curl u_0 \in L^1\cap L^q(\R^2) \quad \text{with} \ q>1, \quad \lim_{|x|\to \infty}u_0(x) = 0 .
\]
We denote by $\chi$ a smooth cutoff function such that $\chi(x)\equiv 0$ in $B(0,3/2)$ and $\chi(x)\equiv 1$ in $B(0,2)^c$. We consider $(\ell_{0},r_{0})\in \R^3$ given, then we define
\[
u_{0}^\varepsilon := \nabla^\perp \Bigg( \psi_{0}(x) \chi\left(\frac{x-h_{0}}\varepsilon \right) + \Big(1-\chi\left(\frac{x-h_{0}}\varepsilon \right)\Big) (\ell_{0}\cdot (x-h_{0})^\perp + r_{0}\tfrac{ |x-h_{0}|^2}2 ) \Bigg)
\] 
which is divergence free, tending to $0$ at infinity, equal to $u_{0}$ far away the solid, and equal to $\ell_{0}+r_{0}(x-h_{0})^\perp$ in the vicinity of $\mathcal{S}^\varepsilon_{0}$. 
By local elliptic regularity, we can show that $\psi_{0}\in L^\infty(B(h_{0},2))$ and as $\frac{1}{\varepsilon}\nabla \chi\left(\frac{\cdot-h_{0}}\varepsilon\right)$ converges weakly to $0$ in $L^2$,
one can check that $u_{0}^\e \rightharpoonup u_{0}$ in $L^2(\R^2)$.

Actually, as we consider only one solid, we can chose $\psi_{0}$ such that $\psi_{0}(h_{0})=0$ and in that case, we can prove that the convergence holds strongly in $L^2(\R^2)$.

A last cutoff example comes from the porous medium analysis or the thin obstacle problem (see \cite{Allaire90a, Lac2015b}). Instead of truncating the steam function, we truncate directly the vector field $u_{0}$ and we add a correction to restore the divergence free condition:
\[
u_{0}^\varepsilon := \chi\left(\frac{x-h_{0}}\varepsilon \right) u_{0}(x) + \Big(1-\chi\left(\frac{x-h_{0}}\varepsilon \right)\Big) (\ell_{0} + r_{0} (x-h_{0})^\perp ) + g^\varepsilon(x)
\]
where $g^\varepsilon \in H^1_{0}(B(h_{0},2\varepsilon)\cap \mathcal{F}_{0}^\varepsilon)$ satisfies
\[
\div g^\varepsilon=- \frac1\varepsilon \nabla \chi\left(\frac{x-h_{0}}\varepsilon \right) \cdot u_{0}(x) + \frac1\varepsilon \nabla\chi\left(\frac{x-h_{0}}\varepsilon \right) \cdot (\ell_{0} + r_{0} (x-h_{0})^\perp ) .
\]
Such a function can be constructed thanks to the Bogovski{\u\i} operator, and we can prove that
\[
\| g^\varepsilon \|_{L^2}\leq C \varepsilon \Big( \|u_{0}\|_{L^2} + \varepsilon |\ell_{0}| + \varepsilon^2 |r_{0}| \Big).
\]
See later Proposition~\ref{lem:bogo} for details. This implies that $u_{0}^\varepsilon \to u_{0}$ in $L^2(\R^2)$.
\end{Example}

\section{Uniform estimates}\label{sec:est}

Let $(u_0^{\varepsilon}, \ell_{0}^\varepsilon,r_{0}^\varepsilon)$ be a family in $L^2(\mathcal{F}^\varepsilon_{0})\times \R^2 \times \R$ verifying \eqref{compatibility} and such that, up to the extension \eqref{extension}, $u_{0}^\varepsilon$ is bounded in $L^2(\R^2)$ (see \eqref{fsr28}).

\subsection{Change of variables and energy estimate}\label{sec:411}

As in \cite{EHL,Takahashi&Tucsnak04}, we make the change of variables 
$$
v^{\varepsilon}(t,x)=u^{\varepsilon}(t,x-h^{\varepsilon}(t)), \quad 
q^{\varepsilon}(t,x)=p^{\varepsilon}(t,x-h^{\varepsilon}(t)), 
$$
and we define 
$$
\ell^{\varepsilon}(t)=(h^{\varepsilon})'(t), \quad r^{\varepsilon}(t)=(\theta^{\varepsilon})'(t).
$$

The vector fields $v^{\varepsilon}$ is the weak solution of a system similar to \eqref{eq01}--\eqref{fsr07}:
{\allowdisplaybreaks
\begin{align}
\frac{\partial v^{\varepsilon}}{\partial t}+ ([v^{\varepsilon}-\ell^\varepsilon]\cdot\nabla) v^{\varepsilon}-\div \sigma(v^{\varepsilon},q^{\varepsilon})
=0 &\quad t>0, \ x\in \mathcal{F}^{\varepsilon}_0, 
\label{mle0.4} 
\\
\div v^{\varepsilon} = 0 &\quad t>0, \ x\in \mathcal{F}^{\varepsilon}_0,\\
\lim_{|x|\to \infty} v^{\varepsilon}(x)=0 & \quad t>0,\\
v^{\varepsilon}(t,x)=\ell^{\varepsilon}(t)+ r^{\varepsilon}(t) x^\perp & \quad t>0, \ x\in \partial \mathcal{S}_0^{\varepsilon},\\
m^{\varepsilon} (\ell^\varepsilon)'(t)=-\int_{\partial \mathcal{S}_0^{\varepsilon}} \sigma(v^{\varepsilon},q^{\varepsilon})n~d\gamma
& \quad t>0,\\
 J^{\varepsilon} (r^{\varepsilon})'(t)=-\int_{\partial \mathcal{S}_0^{\varepsilon}(t)} x^\perp \cdot \sigma(v^{\varepsilon},q^{\varepsilon})n~d\gamma & \quad t>0,\\
 v^{\varepsilon}(0,\cdot)=v_0^\varepsilon & \quad \text{in } \mathcal{F}^{\varepsilon}_0,\\
 \ell^\varepsilon(0)=\ell_{0}^\varepsilon,\quad
 r^\varepsilon(0)=r_{0}^\varepsilon.
 \label{mle0.5}
\end{align}
}

We set the global density in $\mathbb{R}^2$:
 $$
 \rho^\varepsilon(x)=\left\{\begin{array}{ll}
1 & x\in \mathcal{F}^{\varepsilon}_0,\\
\rho & x \in \mathcal{S}^{\varepsilon}_0.
\end{array}
 \right.
 $$

We can define a weak solution 
\begin{Definition}\label{D02}
We say that $(v^{\varepsilon},\ \ell^\varepsilon,\ r^\varepsilon)$ is a global weak solution of \eqref{mle0.4}--\eqref{mle0.5} if, for any $T>0$, we have
\begin{equation*}
v^{\varepsilon} \in L^\infty(0,T;L^2(\mathbb{R}^2))\cap L^2(0,T;H^1(\mathbb{R}^2)), \quad 
v^{\varepsilon}(t,x)=\ell^{\varepsilon}(t)+ r^{\varepsilon}(t) x^\perp \quad \text{in } \mathcal{S}^{\varepsilon}_0,
\end{equation*}
if it satisfies the weak formulation
\begin{equation*}
-\int_0^T \int_{\R^2} \rho^{\varepsilon} v^{\varepsilon} \cdot \left(\frac{\partial \varphi^{\varepsilon}}{\partial t} + ([v^{\varepsilon}-\ell^{\varepsilon}] \cdot \nabla)\varphi^{\varepsilon} \right) \, dx ds
+ 2\nu \int_0^T \int_{\R^2} D(v^{\varepsilon}): D(\varphi^{\varepsilon}) \, dxds = \int_{\R^2} \rho^{\varepsilon} v^{\varepsilon}_0(x) \cdot \varphi^{\varepsilon}(0,x) \, dx,
\end{equation*}
for any $\varphi^\varepsilon \in C^1_c([0,T);H^1(\mathbb{R}^2))$ such that $\varphi^{\varepsilon}(t,\cdot) \in \mathcal{V}_R(\mathcal{F}^{\varepsilon}_0)$.
\end{Definition}

One can check that $u^\varepsilon$ is a weak solution in the sense of Definition~\ref{D01} if and only if $v^\varepsilon$ is a weak solution in the sense of the above definition.

We also define the following functional spaces for $p\in [1,\infty]$,
$$
\mathcal{L}^p_{\varepsilon}=\left\{ v\in L^p(\mathbb{R}^2) \ ; \ \div v=0 \ \text{in} \ \mathbb{R}^2, \quad D(v)=0 \ \text{in}\ \mathcal{S}^{\varepsilon}_0 \right\},
$$
with the norm (for $p\neq \infty$)
$$
\| v \|_{\mathcal{L}^p_{\varepsilon}} = \left( \int_{\mathbb{R}^2} \rho^{\varepsilon} |v|^p \, dx \right)^{1/p}.
$$
For $p=\infty$, the norm $\mathcal{L}^{\infty}$ is the classical $L^{\infty}$ norm.
We recall that for any $v\in \mathcal{L}^p_{\varepsilon}$, there exists $(\ell_v,r_v)\in \mathbb{R}^3$ such that
\begin{equation*}
v(y)=\ell_v + r_v y^\perp \quad (y\in \mathcal{S}^{\varepsilon}_0).
\end{equation*}
Moreover, one can deduce $\ell_v$ from $v$ by
\begin{equation}\label{ellv}
\ell_{v} := \frac{1}{| \mathcal{S}^{\varepsilon}_0 |}\int_{\mathcal{S}^{\varepsilon}_0} v \, dy.
\end{equation}

One can write the system \eqref{mle0.4}--\eqref{mle0.5} in the following abstract form:
\begin{equation*}
\partial_{t} v^\varepsilon + A^\varepsilon v^\varepsilon = \mathbb{P}^\varepsilon \div F^\varepsilon(v^\varepsilon), \quad v^\varepsilon(0)=v_0^\varepsilon,
\end{equation*}
where 
\begin{equation*}
\mathcal{D}(\mathcal{A}^\varepsilon):=\left\{ v\in H^2(\mathbb{R}^2) \ ; \ \div v=0 \ \text{in} \ \mathbb{R}^2, \quad D(v)=0 \ \text{in}\ \mathcal{S}^{\varepsilon}_0 \right\},
\end{equation*}
\begin{equation*}
\mathcal{A}^\varepsilon v :=
\left\{
\begin{array}{ll}
-\nu \Delta v & \text{in}\ \mathcal{F}^{\varepsilon}_0,\\[0.2cm]
\displaystyle \frac{2\nu}{m^\varepsilon} \int_{\partial \mathcal{S}_0^\varepsilon} D(v)n \, ds + 
\frac{2\nu}{J^\varepsilon} \left(\int_{\partial \mathcal{S}_0^\varepsilon} y^\perp\cdot D(v)n \, dy\right) x^\perp & \text{in}\ \mathcal{S}^{\varepsilon}_0.
\end{array}
\right. \quad (v\in \mathcal{D}(\mathcal{A}^\varepsilon)),
\end{equation*}
\begin{equation*}
A^\varepsilon:= \mathbb{P}^\varepsilon \mathcal{A}^\varepsilon,
\end{equation*}
\begin{equation}
 \label{eq:Feps}
	F^\varepsilon (v^\varepsilon)= \left\{
	\begin{array}{rcll}
		v^\varepsilon\otimes (\ell_{v^\varepsilon}- v^\varepsilon) & \text{ on $\mathcal{F}_{0}^\varepsilon$}\,\\
		 0 &\text{ on $\mathcal{S}_{0}^\varepsilon$},
	 \end{array}\right.
\end{equation}
and where $\mathbb{P}^\varepsilon$ denotes the projector from $L^p(\R^2)$ to $\mathcal{L}^p_\varepsilon$, 
and $\ell_{v^\varepsilon}$ is defined through \eqref{ellv}. Note that in the definition of $\mathcal{A}^\varepsilon$, $D(v)n$ corresponds to the trace of the restriction of $D(v)$ to the fluid domain.

The operator $-A^\varepsilon$ is the infinitesimal generator of a semigroup of $(S^\varepsilon(t))_{t\geq 0}$ in $\mathcal{L}^p_\varepsilon$ for $p\in (1,\infty)$
(see \cite{EHL}).
Then, Duhamel's formula gives the following integral formulation of the above equations:
\begin{equation}\label{Duhamel}
	 v^\varepsilon(t) = S^\varepsilon(t) v_{0}^\varepsilon + \int_{0}^t S^\varepsilon(t-s) \mathbb{P}^\varepsilon \div F^\varepsilon(v^\varepsilon(s))\, ds.
\end{equation}

By Sobolev embedding, it is classical to deduce from the weak formulation (see Definition~\ref{D02}) that $\partial_{t} v^\varepsilon$ belongs to $L^2(0,T; \mathcal{V}_R'(\mathcal{F}^{\varepsilon}_{0}))$ and that the relation
\begin{equation*}
\int_0^T \int_{\R^2} \rho^{\varepsilon} \left( \partial_{t }v^{\varepsilon} \cdot \varphi^{\varepsilon} -v^{\varepsilon} \cdot ([v^{\varepsilon}-\ell^{\varepsilon}] \cdot \nabla\varphi^{\varepsilon}) \right) \, dx ds
+ 2\nu \int_0^T \int_{\R^2} D(v^{\varepsilon}): D(\varphi^{\varepsilon}) \, dxds = 0
\end{equation*}
is satisfied for any $\varphi^\varepsilon \in L^2(0,T;\mathcal{V}_R(\mathcal{F}^{\varepsilon}_{0}))$. In particular, we can take $\varphi^{\varepsilon}= v^\varepsilon \mathds{1}_{[0,t]}$, and we remark that
\[
\int_0^t \int_{\R^2} \rho^{\varepsilon} v^{\varepsilon} \cdot ([v^{\varepsilon}-\ell^{\varepsilon}] \cdot \nabla v^{\varepsilon}) \, dx ds =
\frac12\int_0^t \int_{\R^2} \rho^{\varepsilon} (v^{\varepsilon}-\ell^{\varepsilon}) \cdot \nabla |v^{\varepsilon}|^2 \, dx ds =0
\]
because $\div v^\varepsilon=0$ and that $[v^{\varepsilon}-\ell^{\varepsilon}]\cdot n \vert_{\partial \mathcal{S}_{0}^\varepsilon} =0$.
Next, we observe that $\partial_{t} v^\varepsilon \in L^2(0,T; \mathcal{V}_R'(\mathcal{F}^{\varepsilon}_{0}))$ and $v^\varepsilon \in L^2(0,T; \mathcal{V}_R(\mathcal{F}^{\varepsilon}_{0}))$ implies that $v^\varepsilon$ is equal for a.e. time to a function $C([0,T]; \mathcal{L}_\varepsilon^2)$ and that
\[
\frac12 \int_{\R^2} \rho^{\varepsilon}(x) \left|v^{\varepsilon}(t,x)\right|^2 \, dx - \frac12 \int_{\R^2} \rho^{\varepsilon}(x) \left|v^{\varepsilon}_{0}(x)\right|^2 \, dx = \int_{0}^t \int_{\R^2} \rho^{\varepsilon}(x) v^{\varepsilon}(t,x) \cdot \partial_{t}v^{\varepsilon}(t,x) \, dx \quad \text{for a.e. } t\in (0,T).
\]
This means that $v^\varepsilon$ satisfies the energy equality 
\begin{equation*}
 \frac12 \int_{\R^2} \rho^{\varepsilon}(x) \left|v^{\varepsilon}(t,x)\right|^2 \, dx + 2\nu \int_0^t \int_{\R^2} |D(v^{\varepsilon})|^2 \, dx ds= 
\frac12 \int_{\R^2} \rho^{\varepsilon}(x) \left|v^{\varepsilon}_0(x)\right|^2 \, dx \quad \text{for a.e. } t\in (0,T),
\end{equation*}
hence, we have for any $\varepsilon$ 
\begin{equation}\label{fsr10}
 \frac12 \int_{\R^2} \rho^{\varepsilon}(t,x) \left|u^{\varepsilon}(t,x)\right|^2 \, dx + 2\nu \int_0^t \int_{\R^2} |D(u^{\varepsilon})|^2 \, dx ds= 
\frac12 \int_{\R^2} \rho^{\varepsilon}_0(x) \left|u^{\varepsilon}_0(x)\right|^2 \, dx \quad\text{for a.e. } t\in (0,T).
\end{equation}

This energy inequality and the hypotheses \eqref{massless} and \eqref{fsr28} imply that
\begin{equation}\label{est:energy}
(u^{\varepsilon})_{\varepsilon} \quad \text{is bounded in} \quad L^\infty(0,T;L^2(\mathbb{R}^2))\cap L^2(0,T;H^1(\mathbb{R}^2)).
\end{equation}

Let us remark that the energy estimate only implies
\[
\varepsilon (h^\varepsilon)'(t), \ \varepsilon^2 (\theta^\varepsilon)'(t) \text{ bounded in } L^\infty(0,T).
\]
These estimates do not allow to localize the rigid body and then to use the method developed in Section~\ref{sec:3}.
The goal of the sequel of this section is to obtain an estimate of $(h^\varepsilon)'$ independently of $\varepsilon$.

\subsection{Semigroup estimates}

The key for the uniform estimate of $\ell^\varepsilon$ is the following theorem concerning the Stokes-rigid body semigroup.
\begin{Theorem} \label{theo Stokes} 
	For each $q\in (1,\infty)$, the semigroup $S^\varepsilon(t)$ on $\mathcal{L}^q_{\varepsilon}$ satisfies the following decay estimates:

	$\bullet$ For $p \in [q, \infty]$, there exists $K_1 = K_1(p,q)>0$ such that for every $v_0^\varepsilon \in \mathcal{L}^{q}_{\varepsilon}$:
		\begin{equation}\label{Lp-Lq}
			\|S^\varepsilon(t)v_0^\varepsilon\|_{\mathcal{L}^p_{\varepsilon}} \leq K_1 t^{\frac{1}{p} - \frac{1}{q}}\|v_0^\varepsilon\|_{\mathcal{L}^q_{\varepsilon}}
			\qquad \text{for all}\quad t>0.
		\end{equation} 

	$\bullet$	For $2\leq q \leq p < \infty,$ there exists $K_2 = K_2(p,q)>0$ such that for every $F^\varepsilon \in L^q( \mathbb{R}^2 ;M_{2\times2}(\mathbb R))$ satisfying $F^\varepsilon=0$ in $\mathcal{S}_0^\varepsilon$:
	\begin{equation}\label{est div 1}
		 \| S^\varepsilon(t) \mathbb P^\varepsilon \div \, F^\varepsilon \|_{\mathcal{L}^p_{\varepsilon}} \leq K_2 t^{-\frac{1}{2} + \frac{1}{p} - \frac{1}{q}} \|F^\varepsilon\|_{L^q( \R^2)} 
		\qquad \text{for all}\quad t>0.
	\end{equation}

	$\bullet$	For $2\leq q < \infty,$ there exists $K_\ell = K_\ell(q)>0$ such that for every $F^\varepsilon \in L^q( \mathbb{R}^2 ;M_{2\times2}(\mathbb R))$ satisfying $F^\varepsilon=0$ on $\mathcal{S}_0^\varepsilon$:
	\begin{equation} \label{eq_lv}
		|\ell_{S^\varepsilon(t) \mathbb P^\varepsilon \div \, F^\varepsilon}| \leq {K_{\ell}} t^{-\left(\frac 12 + \frac 1q \right)} \|F^\varepsilon\|_{L^q(\mathbb R^2)} \qquad \text{for all}\quad t >0.
	\end{equation}
\end{Theorem}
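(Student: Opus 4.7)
\bigskip

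\textbf{Proof plan.} The basic idea, announced by the authors before the theorem, is to reduce everything to the case of the unit disk $\varepsilon=1$ treated in \cite{EHL} by exploiting the parabolic invariance of the Navier--Stokes equations. Given $v_0^\varepsilon \in \mathcal{L}^q_\varepsilon$ define
\[
V_0(y) := \varepsilon\, v_0^\varepsilon(\varepsilon y), \qquad V(\tau, y) := \varepsilon\, S^\varepsilon(\varepsilon^2 \tau)[v_0^\varepsilon](\varepsilon y).
\]
Since $D(v_0^\varepsilon)=0$ on $\mathcal{S}_0^\varepsilon$ translates into $D(V_0)=0$ on $\mathcal{S}_0^1$, and since $\div v_0^\varepsilon = 0$ is preserved, one has $V_0 \in \mathcal{L}^q_1$; moreover a direct verification at the level of the abstract equation $\partial_\tau V + A^1 V = 0$ (using that $-\nu\Delta$ and the boundary-integral components of $\mathcal{A}^\varepsilon$ both scale as $\varepsilon^{-2}$, and that the mass and moment of inertia scale as $\varepsilon^2 m^1$, $\varepsilon^4 J^1$ precisely to make the rigid-body contributions scale-invariant) shows that $V(\tau) = S^1(\tau) V_0$. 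Equivalently $S^\varepsilon(t) v_0^\varepsilon(x) = \varepsilon^{-1}\bigl[S^1(\varepsilon^{-2} t) V_0\bigr](\varepsilon^{-1} x)$. This identity, together with the fact that $\rho^\varepsilon \equiv \rho_0$ is a fixed positive constant inside the solid (hypothesis \eqref{massless}), so that $\|\cdot\|_{\mathcal{L}^p_\varepsilon}$ and $\|\cdot\|_{L^p(\R^2)}$ are equivalent with $\varepsilon$-independent constants, reduces the proof to three bookkeeping computations.

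For \eqref{Lp-Lq}, a change of variables gives
\[
\|V_0\|_{L^q} = \varepsilon^{1 - 2/q}\|v_0^\varepsilon\|_{L^q}, \qquad
\|S^\varepsilon(t)v_0^\varepsilon\|_{L^p} = \varepsilon^{-1 + 2/p}\,\|S^1(\varepsilon^{-2} t)V_0\|_{L^p}.
\]
Applying the unit-scale decay estimate from \cite{EHL} to $S^1(\varepsilon^{-2}t) V_0$ and combining the two identities, the powers of $\varepsilon$ cancel exactly and leave $K_1 t^{1/p - 1/q} \|v_0^\varepsilon\|_{L^q}$, with the same constant as in the unit case.

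For \eqref{est div 1} and \eqref{eq_lv}, one introduces the dual scaling on the forcing term: set $G(y) := \varepsilon^2 F^\varepsilon(\varepsilon y)$, which vanishes on $\mathcal{S}_0^1$ and satisfies $\|G\|_{L^q} = \varepsilon^{2 - 2/q}\|F^\varepsilon\|_{L^q}$. Because $\div_y G(y) = \varepsilon^3 (\div F^\varepsilon)(\varepsilon y)$ and because the Leray-type projector $\mathbb{P}^\varepsilon$, being defined by the linear constraints $\div=0$ and $D(\cdot)=0$ on $\mathcal{S}_0^\varepsilon$, also commutes with the dilation up to the scaling factor, one obtains the key identity
\[
S^\varepsilon(t)\mathbb{P}^\varepsilon \div F^\varepsilon(x) = \varepsilon^{-3} \bigl[S^1(\varepsilon^{-2} t)\mathbb{P}^1 \div G\bigr](\varepsilon^{-1} x).
\]
Taking $L^p$-norms on $\R^2$ gives an $\varepsilon^{-3 + 2/p}$ prefactor, applying the unit-scale estimate of \cite{EHL} brings in a factor $(\varepsilon^{-2} t)^{-1/2 + 1/p - 1/q}\varepsilon^{2 - 2/q}$, and a small arithmetic check shows the powers of $\varepsilon$ collapse to $\varepsilon^0$. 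For \eqref{eq_lv}, the identity $\ell_{w^\varepsilon(t)} = \varepsilon^{-3} \ell_{W(\varepsilon^{-2} t)}$ (from \eqref{ellv} together with the change of variables on $\mathcal{S}_0^\varepsilon$) combined with the analogous unit estimate from \cite{EHL} produces the bound $K_\ell t^{-1/2 - 1/q} \|F^\varepsilon\|_{L^q}$, again with $\varepsilon$-independent constant.

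The only subtle point in this proof—beyond careful bookkeeping of exponents—is the rigorous justification that the dilation $(t,x)\mapsto(\varepsilon^{-2}t,\varepsilon^{-1}x)$ intertwines $S^\varepsilon$ with $S^1$ and $\mathbb{P}^\varepsilon$ with $\mathbb{P}^1$. Once this is checked at the level of the defining functional spaces $\mathcal{L}^p_\varepsilon$ and $\mathcal{D}(\mathcal{A}^\varepsilon)$ (where everything reduces to the fact that the rigid-body conditions are geometric and rescale trivially), the semigroup intertwining follows by uniqueness of the Cauchy problem. I expect this intertwining step to be the main technical obstacle, but not a deep one: it is essentially a reformulation of the invariance of the fluid--solid system under the parabolic scaling $u^\varepsilon(t,x)= \varepsilon^{-1} u^1(\varepsilon^{-2}t,\varepsilon^{-1}x)$ already pointed out by the authors.
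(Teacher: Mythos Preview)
Your proposal is correct and follows essentially the same approach as the paper: reduce to the unit-disk case $\varepsilon=1$ handled in \cite{EHL} via the parabolic dilation, then check that all powers of $\varepsilon$ cancel. The only cosmetic difference is normalization: the paper uses the simpler linear scaling $v(t,x)=v^\varepsilon(\varepsilon^2 t,\varepsilon x)$ (no extra $\varepsilon$ on the velocity) and correspondingly $F(x)=\varepsilon^{-1}F^\varepsilon(\varepsilon x)$, whereas you carry the full Navier--Stokes factor $\varepsilon$ on the velocity and $\varepsilon^2$ on $F$; both choices are valid for the linear Stokes--rigid body problem and lead to the same cancellations.
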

For $\varepsilon$ fixed, estimates like \eqref{Lp-Lq}-\eqref{est div 1} were only established for the Stokes system with the Dirichlet boundary condition \cite{DanShibata, MaremontiSolonnikov}. For the fluid solid problem with one rigid disk in $\R^2$, this result was recently obtained by Ervedoza, Hillairet and Lacave in \cite{EHL}. The only point to check here is that the constants $K_{1}, K_{2}, K_{\ell}$ are independent of $\varepsilon$, which will be easily obtained by a scaling argument. Indeed, as the above estimates are optimal i.e. correspond to the decay of the heat solution, they are invariant to the parabolic scaling of the Navier-Stokes equations.

\begin{proof}
For $\varepsilon=1$, the statements of the theorem were proved in \cite{EHL}, see therein Theorem 1.1, Corollaries 3.10 and 3.11.

We note that $v^\varepsilon(t):=S^\varepsilon(t)v_0^\varepsilon$ satisfies
{\allowdisplaybreaks
\begin{align*}
\frac{\partial v^{\varepsilon}}{\partial t}-\div \sigma(v^{\varepsilon},q^{\varepsilon})
=0 &\quad t>0, \ x\in \mathcal{F}^{\varepsilon}_0, 
\\
\div v^{\varepsilon} = 0 &\quad t>0, \ x\in \mathcal{F}^{\varepsilon}_0,\\
\lim_{|x|\to \infty} v^{\varepsilon}(x)=0 & \quad t>0,\\
v^{\varepsilon}(t,x)=\ell^{\varepsilon}(t)+ r^{\varepsilon}(t) x^\perp & \quad t>0, \ x\in \partial \mathcal{S}_0^{\varepsilon},\\
m^{\varepsilon} (\ell^\varepsilon)'(t)=-\int_{\partial \mathcal{S}_0^{\varepsilon}} \sigma(v^{\varepsilon},q^{\varepsilon})n~d\gamma
& \quad t>0,\\
 J^{\varepsilon} (r^{\varepsilon})'(t)=-\int_{\partial \mathcal{S}_0^{\varepsilon}(t)} x^\perp \cdot \sigma(v^{\varepsilon},q^{\varepsilon})n~d\gamma & \quad t>0,\\
 v^{\varepsilon}(0,\cdot)=v_0^\varepsilon & \quad \text{in } \mathcal{F}^{\varepsilon}_0,\\
 \ell^\varepsilon(0)=\ell_{0}^\varepsilon,\quad
 r^\varepsilon(0)=r_{0}^\varepsilon.
\end{align*}
}
Setting
\begin{equation}\label{mle2.7}
v(t,x):=v^\varepsilon(\varepsilon^2 t, \varepsilon x), \quad 
q(t,x):=\varepsilon q^\varepsilon(\varepsilon^2 t, \varepsilon x), \quad 
\ell(t):=\ell^\varepsilon(\varepsilon^2 t),\quad 
r(t):=\varepsilon r^\varepsilon(\varepsilon^2 t),
\end{equation}
standard calculation gives that
{\allowdisplaybreaks
\begin{align*}
\frac{\partial v}{\partial t}-\div \sigma(v,q)
=0 &\quad t>0, \ x\in \mathcal{F}^{1}_0, 
\\
\div v = 0 &\quad t>0, \ x\in \mathcal{F}^{1}_0,\\
\lim_{|x|\to \infty} v(x)=0 & \quad t>0,\\
v(t,x)=\ell(t)+ r(t) x^\perp & \quad t>0, \ x\in \partial \mathcal{S}_0^{1},\\
m^{1} \ell'(t)=-\int_{\partial \mathcal{S}_0^{1}} \sigma(v,q)n~d\gamma
& \quad t>0,\\
 J^{1} r'(t)=-\int_{\partial \mathcal{S}_0^{1}(t)} x^\perp \cdot \sigma(v,q)n~d\gamma & \quad t>0,\\
 v(0,\cdot)=v_0 & \quad \text{in } \mathcal{F}^{1}_0,\\
 \ell(0)=\ell_{0},\quad
 r(0)=r_{0},
\end{align*}
}
where
\begin{equation}\label{mle2.8}
v_0(x):=v^\varepsilon_0(\varepsilon x), \quad \ell_{0}:=\ell_{0}^\varepsilon, \quad r_{0}:=\varepsilon r_{0}^\varepsilon.
\end{equation}
This means that $v(t)=v^1(t)=S^1(t)v_0$ and thus that
$$
\|v(t)\|_{\mathcal{L}^p_{1}} \leq K_1 t^{\frac{1}{p} - \frac{1}{q}}\|v_0\|_{\mathcal{L}^q_{1}}
			\qquad \text{for all}\quad t>0.
$$
Using \eqref{mle2.7}-\eqref{mle2.8}, this estimate is equivalent to 
$$
\|v^\varepsilon(t)\|_{\mathcal{L}^p_{\varepsilon}} \leq K_1 t^{\frac{1}{p} - \frac{1}{q}} 
\|v_0^\varepsilon\|_{\mathcal{L}^q_{\varepsilon}}
			\qquad \text{for all}\quad t>0.
$$

Relations \eqref{est div 1} and \eqref{eq_lv} can be done similarly. In that case, we also set
$$
F(x):=\frac{1}{\varepsilon} F^\varepsilon( \varepsilon x)
$$
and we show that if $v^\varepsilon(t)=S^\varepsilon(t) \mathbb P^\varepsilon \div F^\varepsilon$, then
$v$ defined by \eqref{mle2.7} satisfies 
$$
v(t)=S^1(t) \mathbb P^1 \div F.
$$

\end{proof}

\subsection{Uniform estimate on the solid velocity}\label{estim_ell}

We first show that there exists $\lambda_0>0$ such that if 
$\| v^\varepsilon_{0} \|_{\mathcal{L}^2_{\varepsilon}} \leq \lambda_{0}$, then there exists a unique 
\begin{equation}\label{mle0.2}
v^\varepsilon\in \mathcal{C}^0([0,T];\mathcal{L}^2_\varepsilon)\cap \mathcal{C}^0_{3/8}([0,T];\mathcal{L}^8_\varepsilon)
\quad \text{with}\quad \ell^\varepsilon:=\ell_{v^\varepsilon}\in \mathcal{C}^0_{1/2}([0,T];\mathbb{R}^2)
\end{equation}
satisfying \eqref{Duhamel} (that is a {\it mild} solution).
Here we have denoted for any Banach space $X$ by $\mathcal{C}^0_{\alpha}([0,T];X)$ the Banach space of functions $f$ such that
$t\mapsto t^\alpha f(t)$ are continuous from $[0,T]$ in $X$. The norm associated is
$$
\|f\|_{\mathcal{C}^0_{\alpha}([0,T];X)}:=\sup_{t\in [0,T]} t^\alpha \|f(t)\|_X.
$$

\begin{Proposition}\label{P01}
There exist $\lambda_{0},\mu_{0}>0$ independent of $\varepsilon$ such that the following holds for any $T>0$. If $v^\varepsilon_{0}\in \mathcal{L}^2_{\varepsilon}$ satisfies
\begin{equation}\label{Smallness-v0}
\| v^\varepsilon_{0} \|_{\mathcal{L}^2_{\varepsilon}} \leq \lambda_{0} 
\end{equation}
then there exists a unique $v^\varepsilon$ satisfying \eqref{Duhamel} and such that 
\begin{equation*}
 \| v^\varepsilon \|_{\mathcal{C}^0([0,T];\mathcal{L}^2_{\varepsilon})} , \ \| v^\varepsilon \|_{\mathcal{C}^0_{3/8}([0,T];\mathcal{L}^8_{\varepsilon})}, \ \|\ell^\varepsilon \|_{\mathcal{C}^0_{1/2}([0,T];\R^2)} \text{ are bounded by } \mu_{0}.
\end{equation*}
Moreover there exists a constant $C>0$ independent of $\varepsilon$ such that, if $v^\varepsilon_{0a}, v^\varepsilon_{0b}\in \mathcal{L}^2_{\varepsilon}$ are two initial conditions satisfying \eqref{Smallness-v0}, then 
\begin{equation}\label{mle0.3}
\| v^\varepsilon_{a} - v^\varepsilon_{b}\|_{\mathcal{C}^0([0,T];\mathcal{L}^2_\varepsilon)} 
 \leq C \| v^\varepsilon_{0a} - v^\varepsilon_{0b}\|_{\mathcal{L}^2_\varepsilon}. 
\end{equation}
 \end{Proposition}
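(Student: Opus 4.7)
The plan is to solve the Duhamel equation \eqref{Duhamel} by a Banach fixed-point argument applied to
\[
\Phi(v)(t) := S^\varepsilon(t) v_{0}^\varepsilon + \int_{0}^{t} S^\varepsilon(t-s) \mathbb{P}^\varepsilon \div F^\varepsilon(v(s)) \, ds
\]
on the closed ball $B_{\mu_{0}}$ of radius $\mu_{0}$ in
\[
X_{T} := \mathcal{C}^{0}([0,T];\mathcal{L}^{2}_{\varepsilon}) \cap \mathcal{C}^{0}_{3/8}([0,T];\mathcal{L}^{8}_{\varepsilon}),
\]
endowed with the norm $\|v\|_{X_{T}} := \|v\|_{\mathcal{C}^{0}(\mathcal{L}^{2}_{\varepsilon})} + \|v\|_{\mathcal{C}^{0}_{3/8}(\mathcal{L}^{8}_{\varepsilon})} + \|\ell_{v}\|_{\mathcal{C}^{0}_{1/2}([0,T];\mathbb{R}^{2})}$. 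For the linear part $S^\varepsilon(\cdot)v_{0}^\varepsilon$, I apply Theorem~\ref{theo Stokes} at $(p,q)=(2,2)$ and $(p,q)=(8,2)$ directly, and I use the elementary bound $|\ell_{v}| \leq \|v\|_{L^{\infty}(\mathcal{S}_{0}^\varepsilon)} \leq \|v\|_{\mathcal{L}^{\infty}_{\varepsilon}}$ combined with \eqref{Lp-Lq} at $(p,q)=(\infty,2)$. This produces a bound
\[
\|S^\varepsilon(\cdot) v_{0}^\varepsilon\|_{X_{T}} \leq C_{1} \|v_{0}^\varepsilon\|_{\mathcal{L}^{2}_{\varepsilon}},
\]
with $C_{1}$ independent of $\varepsilon$ and of $T$, since the constants of Theorem~\ref{theo Stokes} are.

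For the bilinear term, the pointwise bound $|F^\varepsilon(v)| \leq |v|(|\ell_{v}|+|v|)$ on $\mathcal{F}_{0}^\varepsilon$ together with the interpolation $\|v\|_{L^{4}} \leq \|v\|_{L^{2}}^{1/3} \|v\|_{L^{8}}^{2/3}$ yields, for every $v \in B_{\mu_{0}}$,
\[
\|F^\varepsilon(v(s))\|_{L^{2}(\mathbb{R}^2)} \leq C \mu_{0}^{2} s^{-1/2}, \qquad \|F^\varepsilon(v(s))\|_{L^{4}(\mathbb{R}^2)} \leq C \mu_{0}^{2} s^{-3/4}.
\]
I then invoke \eqref{est div 1} at $(p,q)=(2,2)$ and $(p,q)=(8,4)$ for the two semigroup norms, and \eqref{eq_lv} at $q=4$ for $\ell$. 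Each time-integral takes the form $\int_{0}^{t}(t-s)^{-\alpha}s^{-\beta}\,ds$ with $\alpha,\beta \in (0,1)$; the substitution $s=t\sigma$ reduces it to a Beta-function constant (namely $B(1/2,1/2)$, $B(3/8,1/4)$ and $B(1/4,1/4)$ respectively) times the precise power of $t$ that cancels the weight appearing in the norms of $X_{T}$. One concludes
\[
\left\| \int_{0}^{\cdot} S^\varepsilon(\cdot-s) \mathbb{P}^\varepsilon \div F^\varepsilon(v(s)) \, ds \right\|_{X_{T}} \leq C_{2} \mu_{0}^{2},
\]
again with $C_{2}$ independent of $\varepsilon$ and $T$.

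Decomposing $F^\varepsilon(v_{a})-F^\varepsilon(v_{b}) = (v_{a}-v_{b}) \otimes (\ell_{v_{a}} - v_{a}) + v_{b} \otimes \bigl((\ell_{v_{a}}-\ell_{v_{b}}) - (v_{a}-v_{b})\bigr)$ and running the same estimates yields $\|\Phi(v_{a})-\Phi(v_{b})\|_{X_{T}} \leq C_{3}\mu_{0}\|v_{a}-v_{b}\|_{X_{T}}$. Fixing $\mu_{0}$ so that $C_{3}\mu_{0} \leq 1/2$ and setting $\lambda_{0} := \mu_{0}/(2C_{1})$, the map $\Phi$ sends $B_{\mu_{0}}$ into itself and is a $\frac12$-contraction, hence Banach's theorem delivers the unique mild solution $v^\varepsilon \in B_{\mu_{0}}$. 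Estimate \eqref{mle0.3} then follows by writing $v_{a}^\varepsilon - v_{b}^\varepsilon$ via Duhamel: the linear semigroup term is controlled by $K_{1}\|v_{0a}^\varepsilon-v_{0b}^\varepsilon\|_{\mathcal{L}^{2}_{\varepsilon}}$, and the nonlinear contribution is absorbed thanks to the fact that the full $X_{T}$-norm of the difference is itself controlled by $\|v_{0a}^\varepsilon-v_{0b}^\varepsilon\|_{\mathcal{L}^{2}_{\varepsilon}}$ via the contraction argument. \textbf{The main difficulty} lies in the bookkeeping of the Hölder exponents: the intermediate space $\mathcal{L}^{8}$ with weight $3/8$ and the weight $1/2$ on $\ell_v$ are precisely the choices that render every $(t-s)$-exponent and every $s$-exponent strictly less than $1$ while respecting the constraint $2 \leq q \leq p$ of Theorem~\ref{theo Stokes}; the $\varepsilon$-uniformity is then automatic since that theorem is already uniform in $\varepsilon$.
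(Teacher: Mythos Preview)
Your proof is correct and follows essentially the same approach as the paper: a Kato--Fujita fixed-point argument on the space $\mathcal{C}^{0}\mathcal{L}^{2}_{\varepsilon}\cap \mathcal{C}^{0}_{3/8}\mathcal{L}^{8}_{\varepsilon}$ with the $\ell$-norm weighted by $t^{1/2}$, using the semigroup estimates of Theorem~\ref{theo Stokes} at the same exponent pairs $(2,2)$, $(8,4)$, $(\infty,2)$ for \eqref{Lp-Lq} and $q=4$ for \eqref{eq_lv}. The only cosmetic differences are that the paper packages the nonlinearity as a bilinear map $\Phi(v,w)$ before estimating, and uses the convention $B(\alpha,\beta)=\int_0^1(1-\tau)^{-\alpha}\tau^{-\beta}\,d\tau$ for the Beta function (yielding $B(5/8,3/4)$, $B(1/2,1/2)$, $B(3/4,3/4)$, which match your values under the standard convention).
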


\begin{proof}
As we have proved in the previous theorem that the constant in the semigroup estimates are independent of $\varepsilon$, it is then enough to follow the fixed point argument in \cite[pp. 364-371]{EHL}. For completeness, let us write here the details.

\medskip

Let us introduce the space
\begin{equation*}
\mathcal{X}^\varepsilon:=\left\{ v^\varepsilon \in \mathcal{C}^0([0,T];\mathcal{L}^2_\varepsilon)\cap \mathcal{C}^0_{3/8}([0,T];\mathcal{L}^8_\varepsilon)
\quad \text{with}\quad \ell_{v^\varepsilon}\in \mathcal{C}^0_{1/2}([0,T];\mathbb{R}^2) \right\}
\end{equation*}
endowed with the norm
$$
\| v^\varepsilon\|_{\mathcal{X}^\varepsilon} := \|v^\varepsilon\|_{\mathcal{C}^0([0,T];\mathcal{L}^2_\varepsilon)}+
\|v^\varepsilon\|_{\mathcal{C}^0_{3/8}([0,T];\mathcal{L}^8_\varepsilon)}
+\|\ell_{v^\varepsilon}\|_{\mathcal{C}^0_{1/2}([0,T];\mathbb{R}^2)}.
$$
Let us also define the map
\begin{equation*}
\mathcal{Z}^\varepsilon : \mathcal{X}^\varepsilon \to \mathcal{X}^\varepsilon, 
\end{equation*}
defined by
$$
\mathcal{Z}^\varepsilon(v^\varepsilon)(t)= 
 S^\varepsilon(t) v_{0}^\varepsilon + \displaystyle \int_{0}^t S^\varepsilon(t-s) \mathbb{P}^\varepsilon \div F^\varepsilon(v^\varepsilon)(s)\, ds,
$$
where $F^\varepsilon$ is defined by \eqref{eq:Feps}. One can define 
$$
\Phi(v^\varepsilon,w^\varepsilon)(t)=\displaystyle \int_{0}^t S^\varepsilon(t-s) \mathbb{P}^\varepsilon \div G^\varepsilon(v^\varepsilon,w^\varepsilon)(s)\, ds,
$$
where
\begin{equation*}
	G^\varepsilon (v^\varepsilon,w^\varepsilon)= \left\{
	\begin{array}{rcll}
		v^\varepsilon\otimes (\ell_{w^\varepsilon}- w^\varepsilon) & \text{ on $\mathcal{F}_{0}^\varepsilon$}\,\\
		 0 &\text{ on $\mathcal{S}_{0}^\varepsilon$}.
	 \end{array}\right.
\end{equation*}

We deduce from \eqref{est div 1} that
$$
t^{\frac38} \|\Phi(v^\varepsilon,w^\varepsilon)(t)\|_{\mathcal{L}^8_\varepsilon} \leq t^{\frac38} K_2(8,4) \int_0^t (t-s)^{-5/8} 
\left( \|v^\varepsilon(s)\otimes w^\varepsilon(s)\|_{\mathcal{L}^4_\varepsilon}
+ |\ell_{w^\varepsilon}(s)| \|v^\varepsilon(s)\|_{\mathcal{L}^4_\varepsilon}
\right) \, ds.
$$
Using H\"older's inequalities, we obtain from the above inequality that
\begin{align}
t^{\frac38} \|\Phi(v^\varepsilon,w^\varepsilon)(t)\|_{\mathcal{L}^8_\varepsilon} 
&\leq t^{\frac38} K_2(8,4) \int_0^t (t-s)^{-5/8} 
\Big( s^{-\frac38} \|v^\varepsilon\|_{\mathcal{C}^0_{3/8}\mathcal{L}^8_\varepsilon} s^{-\frac38} \|w^\varepsilon\|_{\mathcal{C}^0_{3/8}\mathcal{L}^8_\varepsilon} \nonumber\\
&\hspace{4.5cm}+ s^{-\frac12}|\ell_{w^\varepsilon}|_{\mathcal{C}^0_{1/2}} \|v^\varepsilon\|_{L^\infty\mathcal{L}^2_\varepsilon}^{1/3} (s^{-\frac38})^{2/3} \|v^\varepsilon\|_{\mathcal{C}^0_{3/8}\mathcal{L}^8_\varepsilon}^{2/3}
\Big) \, ds \nonumber
\\
&\leq 2K_2(8,4) B\left(\frac 58,\frac 34\right) \|v^\varepsilon\|_{\mathcal{X}^\varepsilon} \|w^\varepsilon\|_{\mathcal{X}^\varepsilon},\label{mle1.8}
\end{align}
where $B(\cdot,\cdot)$ is the Beta function:
\[
B(\alpha,\beta):=\int_{0}^1 (1-\tau)^{-\alpha} \tau^{-\beta}\, d\tau.
\]

Similarly,
\begin{equation}\label{mle1.6}
\|\Phi(v^\varepsilon,w^\varepsilon)(t)\|_{\mathcal{L}^2_\varepsilon} 
\leq 2K_2(2,2) B\left(\frac 12,\frac 12\right) \|v^\varepsilon\|_{\mathcal{X}^\varepsilon} \|w^\varepsilon\|_{\mathcal{X}^\varepsilon}.
\end{equation}
Finally,
$$
\ell_{\Phi(v^\varepsilon,w^\varepsilon)(t)}=\displaystyle \int_{0}^t \ell_{S^\varepsilon(t-s) \mathbb{P}^\varepsilon \div G^\varepsilon(v^\varepsilon,w^\varepsilon)(s)}\, ds,
$$
and from \eqref{eq_lv}, we deduce
$$
t^{\frac12} |\ell_{\Phi(v^\varepsilon,w^\varepsilon)(t)}|
\leq \displaystyle t^{\frac12} \int_{0}^t K_\ell(4) (t-s)^{-\left(\frac{1}{2}+\frac{1}{4}\right)} \|G^\varepsilon(v^\varepsilon,w^\varepsilon)(s)\|_{L^4(\R^2)} \, ds.
$$
With the same estimates as in \eqref{mle1.8}, we obtain
\begin{equation}\label{mle1.7}
t^{\frac12} |\ell_{\Phi(v^\varepsilon,w^\varepsilon)(t)}|
\leq 2K_\ell(4) B\left(\frac 34,\frac 34\right) \|v^\varepsilon\|_{\mathcal{X}^\varepsilon} \|w^\varepsilon\|_{\mathcal{X}^\varepsilon}.
\end{equation}
Gathering \eqref{mle1.8}, \eqref{mle1.6} and \eqref{mle1.7} yields 
\begin{equation}\label{mle1.9}
\|\Phi(v^\varepsilon,w^\varepsilon)\|_{\mathcal{X}^\varepsilon} \leq C_0 \|v^\varepsilon\|_{\mathcal{X}^\varepsilon}\|w^\varepsilon\|_{\mathcal{X}^\varepsilon},
\end{equation}
where
\begin{equation*}
C_0=2 \left(K_2(8,4) B\left(\frac 58,\frac 34\right) +K_2(2,2) B\left(\frac 12,\frac 12\right) + K_\ell(4) B\left(\frac 34,\frac 34\right) \right).
\end{equation*}

We assume \eqref{Smallness-v0} for $\lambda_0$ that we fix below and we apply \eqref{Lp-Lq} in order to obtain 
\begin{equation}\label{mle2.1}
\| S^\varepsilon v_{0}^\varepsilon\|_{\mathcal{X}^\varepsilon} \leq C_1\lambda_0,
\end{equation}
where 
\begin{equation*}
C_1= K_1(8,2) + K_1(2,2) + K_1(\infty,2).
\end{equation*}
Note that we have used in \eqref{mle2.1} the relation
$$
 |\ell_{S^\varepsilon(t) v_{0}^\varepsilon}|
\leq \|S^\varepsilon(t)v_0^\varepsilon\|_{\mathcal{L}^\infty_{\varepsilon}} 
$$
which is a consequence of \eqref{ellv}.

Relations \eqref{mle1.9} and \eqref{mle2.1} imply that the mapping $\mathcal{Z}^\varepsilon$ is well-defined and that
\begin{equation*}
\| \mathcal{Z}^\varepsilon(v^\varepsilon)\|_{\mathcal{X}^\varepsilon} \leq C_1\lambda_0 +C_0 \|v^\varepsilon\|_{\mathcal{X}^\varepsilon}^2.
\end{equation*}
Let us set
\begin{equation}\label{def:R}
 R:=\frac{1}{4C_0}\quad \text{and}\quad \lambda_0 = \min\Big( \frac{R}{2C_{1}},R\Big).
\end{equation}
Then the closed ball $B_{\mathcal{X}^\varepsilon}(0,R)$ of $\mathcal{X}^\varepsilon$ is invariant by $\mathcal{Z}^\varepsilon$ and 
if $v^\varepsilon, w^\varepsilon\in B_{\mathcal{X}^\varepsilon}(0,R)$, 
\begin{equation*}
\| \mathcal{Z}^\varepsilon(v^\varepsilon)-\mathcal{Z}^\varepsilon(w^\varepsilon)\|_{\mathcal{X}^\varepsilon} 
=\| \Phi(v^\varepsilon,v^\varepsilon-w^\varepsilon) + \Phi(v^\varepsilon-w^\varepsilon, w^\varepsilon)\|_{\mathcal{X}^\varepsilon} 
\leq 2 C_0 R \|v^\varepsilon-w^\varepsilon\|_{\mathcal{X}^\varepsilon}=\frac12 \|v^\varepsilon-w^\varepsilon\|_{\mathcal{X}^\varepsilon}.
\end{equation*}

Using the Banach fixed point, we deduce the existence and uniqueness results. Moreover, we have $\mu_{0}=R=1/(4C_{0})$, which is independent of $\varepsilon$. 
This strategy comes from \cite{Kato} and \cite{KatoFujita62}. It is originally done through an iterative method, but it was adapted as a fixed point argument in \cite{Mon2006a} (see also \cite{SilTak2012a}).

{\it Sensitivity of $v^\varepsilon$ to the initial data.} 
Assume $v^\varepsilon_{0a}, v^\varepsilon_{0b}\in \mathcal{L}^2_{\varepsilon}$ satisfy \eqref{Smallness-v0} with $\lambda_{0}$ defined in \eqref{def:R}.
Then, 
\begin{align*}
\| v^\varepsilon_{a} - v^\varepsilon_{b}\|_{\mathcal{X}^\varepsilon} 
& \leq \| S^\varepsilon (v_{0a}^\varepsilon-v_{0b}^\varepsilon)\|_{\mathcal{X}^\varepsilon} 
 +\left\| \Phi(v^\varepsilon_{a},v^\varepsilon_{a} - v^\varepsilon_{b})\right\|_{\mathcal{X}^\varepsilon} 
 +\left\| \Phi(v^\varepsilon_{a} - v^\varepsilon_{b}, v^\varepsilon_{b})\right\|_{\mathcal{X}^\varepsilon} 
 \\
& \leq C_1 \| v^\varepsilon_{0a} - v^\varepsilon_{0b}\|_{\mathcal{L}^2_\varepsilon} 
 +C_0 \| v^\varepsilon_{a} - v^\varepsilon_{b}\|_{\mathcal{X}^\varepsilon} 
 \left( \| v^\varepsilon_{a} \|_{\mathcal{X}^\varepsilon} 
 +\| v^\varepsilon_{b}\|_{\mathcal{X}^\varepsilon} \right)\\
 & \leq C_1 \| v^\varepsilon_{0a} - v^\varepsilon_{0b}\|_{\mathcal{L}^2_\varepsilon} 
 +2C_{0}\mu_{0} \| v^\varepsilon_{a} - v^\varepsilon_{b}\|_{\mathcal{X}^\varepsilon} .
\end{align*}
We conclude that
$$
\| v^\varepsilon_{a} - v^\varepsilon_{b}\|_{\mathcal{X}^\varepsilon} \leq 2 C_{1}\| v^\varepsilon_{0a} - v^\varepsilon_{0b}\|_{\mathcal{L}^2_\varepsilon}.
$$
\end{proof}

One can show that a mild solution in the above sense is also a weak solution (see \cite{EHL}). For sake of completeness, we give the proof of this result here.

\begin{Lemma}
Assume $v^\varepsilon$ satisfies \eqref{mle0.2}-\eqref{Smallness-v0} and \eqref{Duhamel}. Then $v^\varepsilon$ is the weak solution of \eqref{mle0.4}-\eqref{mle0.5} 
in the sense of Definition~\ref{D02}.
\end{Lemma}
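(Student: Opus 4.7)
The plan is to transform the integral identity \eqref{Duhamel} into the weak formulation of Definition~\ref{D02} in three steps: upgrade the regularity to $L^2(0,T;H^1(\R^2))$, recover the abstract evolution equation $\partial_{t} v^\varepsilon + A^\varepsilon v^\varepsilon = \mathbb{P}^\varepsilon \div F^\varepsilon(v^\varepsilon)$, and finally test it against $\varphi^\varepsilon$ and integrate by parts in both space and time.

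The $L^\infty(0,T;L^2(\R^2))$ bound as well as the rigidity of $v^\varepsilon(t,\cdot)$ in $\mathcal{S}_{0}^\varepsilon$ are built into $v^\varepsilon\in\mathcal{C}^0([0,T];\mathcal{L}^2_\varepsilon)$. The delicate point is the $L^2(0,T;H^1(\R^2))$ regularity: I would obtain it from the analyticity of $(S^\varepsilon(t))$ on the Hilbert space $\mathcal{L}^2_\varepsilon$, which provides the parabolic smoothing $\|(A^\varepsilon)^{1/2}S^\varepsilon(t)f\|_{\mathcal{L}^2_\varepsilon}^2\in L^1_{t}(0,T)$ for $f\in\mathcal{L}^2_\varepsilon$, combined with the identification of $\mathcal{D}((A^\varepsilon)^{1/2})$ with $\mathcal{V}_R(\mathcal{F}_{0}^\varepsilon)$. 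Applied to \eqref{Duhamel}, this yields $v^\varepsilon\in L^2(0,T;\mathcal{V}_R(\mathcal{F}_{0}^\varepsilon))$; the nonlinear Duhamel contribution is treated similarly after noting that $F^\varepsilon(v^\varepsilon)$ belongs to suitable $L^q_{t}L^p_{x}$ spaces thanks to the $\mathcal{C}^0_{3/8}\mathcal{L}^8$-bound on $v^\varepsilon$ and the bound on $\ell^\varepsilon$. This is the main technical obstacle; it is a standard consequence of semigroup theory once the structure of $A^\varepsilon$ from \cite{EHL} is known.

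The remaining steps are direct computations. Differentiating \eqref{Duhamel} in time yields $\partial_{t}v^\varepsilon+A^\varepsilon v^\varepsilon=\mathbb{P}^\varepsilon\div F^\varepsilon(v^\varepsilon)$ in $L^2(0,T;\mathcal{V}_R'(\mathcal{F}_{0}^\varepsilon))$. Pairing this equation with a test function $\varphi^\varepsilon\in C^1_c([0,T);H^1(\R^2))$ satisfying $\varphi^\varepsilon(t,\cdot)\in\mathcal{V}_R(\mathcal{F}_{0}^\varepsilon)$ in the weighted inner product $(\cdot,\cdot)_{\mathcal{L}^2_\varepsilon}$ and integrating by parts in $t$ produces the $\partial_{t}\varphi^\varepsilon$ term and the initial datum contribution $\int \rho^\varepsilon v_{0}^\varepsilon\cdot\varphi^\varepsilon(0)\,dx$. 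The Stokes term is obtained from the key duality
\[
(A^\varepsilon v,\varphi)_{\mathcal{L}^2_\varepsilon}=2\nu\int_{\R^2}D(v):D(\varphi)\,dx\qquad\text{for }\varphi\in\mathcal{V}_R(\mathcal{F}_{0}^\varepsilon),
\]
which itself results from integration by parts in the fluid, the boundary contribution on $\partial\mathcal{S}_{0}^\varepsilon$ cancelling exactly the $1/m^\varepsilon$ and $1/J^\varepsilon$ contributions coming from the solid part of $A^\varepsilon$ acting against the rigid components of $\varphi$. For the nonlinearity, self-adjointness of $\mathbb{P}^\varepsilon$ in $\mathcal{L}^2_\varepsilon$ together with $\mathbb{P}^\varepsilon\varphi^\varepsilon=\varphi^\varepsilon$ reduces it to $\int\rho^\varepsilon\div F^\varepsilon\cdot\varphi^\varepsilon$; integration by parts in the fluid using $\div(v^\varepsilon-\ell^\varepsilon)=0$ and the no-penetration condition $(v^\varepsilon-\ell^\varepsilon)\cdot n|_{\partial\mathcal{S}_{0}^\varepsilon}=0$, together with the vanishing of the corresponding solid contribution by radial symmetry of the disk (center of mass at $0$), yields $\int_{\R^2}\rho^\varepsilon v^\varepsilon\cdot([v^\varepsilon-\ell^\varepsilon]\cdot\nabla)\varphi^\varepsilon\,dx$, which is precisely the convective term of Definition~\ref{D02}.
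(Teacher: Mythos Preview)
Your argument is essentially sound, but it takes a different route from the paper. The paper does not verify the weak formulation directly from \eqref{Duhamel}; instead it argues by approximation and stability. One regularizes the initial datum by a sequence $(v_{0n}^\varepsilon)_n\subset\mathcal{D}((A^\varepsilon)^{1/2})$ converging to $v_0^\varepsilon$ in $\mathcal{L}^2_\varepsilon$ and still satisfying \eqref{Smallness-v0}. For such data, \cite{Takahashi&Tucsnak04} provides a unique \emph{strong} solution $v_n^\varepsilon$, which trivially satisfies both the Duhamel formula and the weak formulation; moreover, \cite{Takahashi&Tucsnak04} shows that $v_n^\varepsilon$ converges to the weak solution of \eqref{mle0.4}--\eqref{mle0.5}. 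On the other hand, the Lipschitz bound \eqref{mle0.3} from Proposition~\ref{P01} forces $v_n^\varepsilon\to v^\varepsilon$ in $L^\infty(0,T;\mathcal{L}^2_\varepsilon)$. Identifying the two limits finishes the proof.

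The advantage of the paper's approach is economy: it re-uses the continuity estimate \eqref{mle0.3} already established, and offloads all the integration-by-parts computations (your second and third steps) to the strong-solution setting, where they are immediate. Your approach is more self-contained and makes the structure of the weak formulation explicit, but the ``main technical obstacle'' you flag---the $L^2(0,T;H^1)$ regularity of the Duhamel integral---requires genuine work (smoothing estimates for $(A^\varepsilon)^{1/2}S^\varepsilon(t)\mathbb{P}^\varepsilon\div$) that the paper avoids entirely. Your handling of the convective term, in particular the cancellation of the solid contribution via $\int_{\mathcal{S}_0^\varepsilon}x\,dx=0$, is correct.
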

\begin{proof}
Let us consider a sequence $(v_{0n}^\varepsilon)_n$ with values in $\mathcal{D}((A^\varepsilon)^{1/2})$ such that
$$
v_{0n}^\varepsilon \to v_0^\varepsilon \quad \text{in} \ \mathcal{L}^2_{\varepsilon}
$$ 
and such that $v_{0n}^\varepsilon$ satisfies \eqref{Smallness-v0}.
For all $n$, it is proved in \cite{Takahashi&Tucsnak04} that there exists a unique strong solution
$$
v_n^\varepsilon \in H^1(0,T;\mathcal{L}^2_\varepsilon)\cap C([0,T];\mathcal{D}((A^\varepsilon)^{1/2})) \cap L^2(0,T;\mathcal{D}(A^\varepsilon))
$$
of \eqref{mle0.4}-\eqref{mle0.5} and it satisfies \eqref{Duhamel} since in this case
$$
\div F^\varepsilon(v^\varepsilon_n) \in L^2(0,T;L^2(\mathbb{R}^2)). 
$$
It is also proved in \cite{Takahashi&Tucsnak04} that $(v_n^\varepsilon)$ converges towards the weak solution of \eqref{mle0.4}-\eqref{mle0.5} 
in the sense of Definition~\ref{D02}.

From \eqref{mle0.3}, we also have that $(v_n^\varepsilon)$ converges towards the mild solution of Proposition~\ref{P01} in $L^\infty(0,T;\mathcal{L}^2_\varepsilon)$.

Consequently, the mild solution $v^\varepsilon$, that satisfies \eqref{mle0.2}-\eqref{Smallness-v0}, is the weak solution associated to $v_0^\varepsilon$.

\end{proof}

\section{Proof of Theorem~\ref{mainthm2}}\label{sec:3}

This section is dedicated to the proof of Theorem~\ref{mainthm2}. As recalled in the introduction, \cite{Takahashi&Tucsnak04} established that for any $\varepsilon>0$, there exists a unique weak solution $(u^{\varepsilon}, h^{\varepsilon}, \theta^{\varepsilon})$ to problem \eqref{eq01}--\eqref{fsr07} in the sense of Definition~\ref{D01}. Let us fix $T>0$.

\subsection{First convergences}\label{sec:conv}

Thanks to the energy estimate \eqref{est:energy}, we can extract a subsequence such that
\begin{equation}\label{rev00}
u^{\varepsilon} \overset{*}{\rightharpoonup} u \quad \quad \text{in} \quad L^\infty(0,T;L^2(\mathbb{R}^2))\cap L^2(0,T;H^1(\mathbb{R}^2)).
\end{equation}
By abuse of notation, we continue to write $u^{\varepsilon}$ the subsequence. 

If the initial data satisfies the smallness condition \eqref{petitesse} with $\lambda_{0}$ given in Proposition~\ref{P01}, we deduce from Section~\ref{estim_ell} that
\begin{equation*}
|(h^\varepsilon)'(t)| \leq \frac{\mu_{0}}{\sqrt{t}} \quad (t>0),
\end{equation*}
where $\mu_{0}$ is independent of $\varepsilon$. As a consequence,
\begin{equation*}
|h^\varepsilon(t)| \leq 2\mu_{0}\sqrt{T} \quad (t\in [0,T]).
\end{equation*}
We fix $q\in (1,2)$, thus $(h^\varepsilon)$ is bounded in $W^{1,q}(0,T;\mathbb{R}^2)$ and
we have, up to a subsequence,
\begin{equation}\label{rev:convh}
h^{\varepsilon} \to h \quad \text{uniformly in} \ [0,T],
\end{equation}
with
\begin{equation}\label{rev02}
h\in W^{1,q}(0,T).
\end{equation}

\subsection{Modified test functions}\label{sec:cutoff}

The key to treat shrinking obstacles problem is to approximate test functions in $\mathbb{R}^2$ by admissible test functions in the perforated domain.

\begin{Proposition}\label{lem:bogo}
Let $T>0$, $\varphi \in C^\infty_c([0,T)\times \mathbb{R}^2)$ with $\div \varphi=0$.
We consider $q\in (1,2)$ as in Section~\ref{sec:conv}.
For any $\eta>0$, there exists $\varphi^\eta \in W^{1,q}_c([0,T);H^1(\mathbb{R}^2))$ satisfying
\begin{equation}\label{fsr4.1}
\div \varphi^\eta =0 \quad \text{in} \ [0,T)\times \mathbb{R}^2,
\end{equation}
\begin{equation}\label{fsr4.8}
\varphi^\eta \equiv 0 \quad t\in [0,T), \quad x\in B\left(h(t),\frac{\eta}{2}\right),
\end{equation}
\begin{equation}\label{fsr4.4}
 \varphi^\eta \overset{*}{\rightharpoonup} \varphi \quad L^\infty(0,T;H^1(\mathbb{R}^2)),
\end{equation}
\begin{equation}\label{fsr4.6}
 \partial_t \varphi^\eta \rightharpoonup \partial_t \varphi \quad L^q(0,T;L^2(\mathbb{R}^2)).
\end{equation}
\end{Proposition}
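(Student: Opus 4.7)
My plan is to follow the classical cutoff-plus-Bogovski\u\i{} scheme used for homogenization of incompressible fluids. Fix $\chi\in C^\infty(\R^2;[0,1])$ with $\chi\equiv 0$ on $B(0,1/2)$ and $\chi\equiv 1$ on $\R^2\setminus B(0,1)$, and set
\[
\chi^\eta(t,x):=\chi\left(\frac{x-h(t)}\eta\right),\qquad A^\eta(t):=B(h(t),\eta)\setminus\overline{B(h(t),\eta/2)}.
\]
Since $\div\varphi=0$, the product $\chi^\eta\varphi$ vanishes on $B(h(t),\eta/2)$ and satisfies $\div(\chi^\eta\varphi)=\nabla\chi^\eta\cdot\varphi$, a function supported in $A^\eta(t)$ whose integral vanishes there (on the inner boundary $\chi^\eta=0$, while on the outer boundary $\int_{\partial B(h(t),\eta)}\varphi\cdot n\,ds=\int_{B(h(t),\eta)}\div\varphi\,dx=0$). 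I can therefore solve the Bogovski\u\i{} problem on $A^\eta(t)$ to produce $g^\eta(t,\cdot)\in H^1_0(A^\eta(t))$ with $\div g^\eta=\nabla\chi^\eta\cdot\varphi$, and set $\varphi^\eta:=\chi^\eta\varphi-g^\eta$ (extending $g^\eta$ by $0$). Properties \eqref{fsr4.1} and \eqref{fsr4.8} are then immediate, and the compact support in $[0,T)$ of $\varphi^\eta$ is inherited from $\varphi$.

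To obtain $\eta$-uniform estimates I exploit the scaling invariance of Bogovski\u\i: writing $g^\eta(t,x)=\eta G^\eta(t,(x-h(t))/\eta)$ reduces the problem to the fixed annulus $A:=B(0,1)\setminus\overline{B(0,1/2)}$, where the operator $B_A$ is bounded $L^p_0(A)\to W^{1,p}_0(A)$ for every $p\in(1,\infty)$. With $\tilde f^\eta(t,y):=\eta^{-1}\nabla\chi(y)\cdot\varphi(t,h(t)+\eta y)$ satisfying $\|\tilde f^\eta\|_{L^2(A)}\leq C\|\varphi\|_\infty/\eta$, a direct rescaling computation yields
\[
\|g^\eta\|_{L^\infty_tL^2_x}\leq C\eta\|\varphi\|_\infty,\qquad \|\nabla g^\eta\|_{L^\infty_tL^2_x}\leq C\|\varphi\|_\infty.
\]
Hence $g^\eta\to 0$ strongly in $L^\infty_tL^2_x$, while $\nabla g^\eta$ is bounded in $L^\infty_tL^2_x$ with support in the shrinking annulus $A^\eta(t)$, so $\nabla g^\eta\overset{*}{\rightharpoonup}0$. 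Combined with $\chi^\eta\nabla\varphi\to\nabla\varphi$ strongly in $L^\infty_tL^2_x$ (dominated convergence) and with $(\nabla\chi^\eta)\varphi$ being uniformly bounded in $L^\infty_tL^2_x$ with shrinking support (hence weakly null), this proves \eqref{fsr4.4}.

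The main difficulty is \eqref{fsr4.6}, because Bogovski\u\i{} is applied on a moving annulus whose motion has only $W^{1,q}$ regularity. I decompose
\[
\partial_t\varphi^\eta=\chi^\eta\partial_t\varphi+(\partial_t\chi^\eta)\varphi-\partial_t g^\eta
\]
with $\partial_t\chi^\eta=-\eta^{-1}h'(t)\cdot(\nabla\chi)((x-h(t))/\eta)$. The first term converges strongly to $\partial_t\varphi$ in $L^q_tL^2_x$; the second is pointwise bounded in $L^2_x$ by $C|h'(t)|\|\varphi\|_\infty$, hence bounded in $L^q_tL^2_x$ since $h'\in L^q$, and weakly converges to $0$ by shrinking support. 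For $\partial_t g^\eta$, the scaling representation gives
\[
\partial_t g^\eta(t,x)=\eta(\partial_tG^\eta)(t,y)-h'(t)\cdot(\nabla_yG^\eta)(t,y),\quad y=\frac{x-h(t)}\eta,
\]
and since $\partial_t\tilde f^\eta$ is bounded in $L^2(A)$ by $C(1+|h'(t)|)/\eta$, continuity of $B_A$ yields $\|\partial_tG^\eta\|_{H^1(A)}\leq C(1+|h'(t)|)/\eta$, whence after unscaling $\|\partial_t g^\eta\|_{L^2_x}\leq C(1+|h'(t)|)$ uniformly in $\eta$. Thus $\partial_t g^\eta$ is bounded in $L^q_tL^2_x$ and supported in the shrinking annulus, so it also converges weakly to $0$; summing the three contributions proves \eqref{fsr4.6}. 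The delicate point is that the low time regularity of $h$ enters only through the factor $|h'(t)|$, which is exactly integrable to the power $q$, so the scheme closes.
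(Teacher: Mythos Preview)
Your proof is correct and follows essentially the same route as the paper's: the construction $\varphi^\eta=\chi^\eta\varphi-g^\eta$ with $g^\eta$ obtained via Bogovski\u\i{} on the fixed annulus $A$ after rescaling is identical, as are the estimates leading to \eqref{fsr4.4}. Your treatment of \eqref{fsr4.6} is in fact slightly more careful than the paper's, since you explicitly track the $|h'(t)|$ contribution coming from $\partial_t\tilde f^\eta$ (the paper states strong convergence of the corresponding term in $L^\infty_tL^2_x$, which tacitly ignores this factor), but the overall argument and conclusion are the same.
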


\begin{proof}
We introduce a cutoff function $\chi \in C^{\infty}(\mathbb{R}^2,[0,1])$ such that $\chi \equiv 1$ in $B(0,1)^c$ and $\chi \equiv 0$ in $B(0,1/2)$. Let us denote the annulus $B(0,1) \setminus \overline{B(0,1/2)}$ by $A$.

We remark that the function $\tilde \varphi^\eta: (t,y)\mapsto \varphi(t,\eta y + h(t)) \nabla \chi(y)$ belongs to $W^{1,q}(0,T;L^2(A))$ and verifies for any $t$
\begin{align*}
 \int_{A} \tilde \varphi^\eta (t,y)\, dy & = \int_{A} \div\Big( \varphi(t,\eta y + h(t)) \chi(y) \Big) \, dy = \int_{\partial B(0,1)} \varphi(t,\eta y + h(t)) \cdot n(y) \, ds\\
 & = \int_{B(0,1)} \div\Big( \varphi(t,\eta y + h(t)) \Big) \, dy =0,
\end{align*}
where we have used twice that $\varphi$ is divergence free, that $\chi \equiv 1$ on $\partial B(0,1)$ and that $\chi \equiv 0$ on $\partial B(0,1/2)$.
With these properties, it is known by \cite[Theorem III.3.1]{GaldiBook} (and Exercice III.3.6) that there exists $C$ depending only on $A$ such that the problem
\begin{gather*}
 \div \tilde g^\eta = \tilde \varphi^\eta, \quad \tilde g^\eta \in W^{1,q}(0,T;H^1_{0}(A))
\end{gather*}
has a solution such that
\begin{gather*}
 \| \tilde g^\eta \|_{L^\infty(0,T;H^1(A))} \leq C \| \tilde \varphi^\eta \|_{L^\infty(0,T;L^2 (A))}, \\
 \| \partial_{t} \tilde g^\eta \|_{L^q(0,T;H^1(A))} \leq C \| \partial_{t} \tilde \varphi^\eta \|_{L^q(0,T;L^2 (A))} .
\end{gather*}
Extending $\tilde g^\eta$ by zero in the exterior of $A$, we define
\[
\varphi^{\eta}(t,x)=\varphi(t,x) \chi \Big( \frac{x-h(t)}\eta\Big) - g^\eta(t,x)
\]
where
\[
g^\eta(t,x):= \tilde g^\eta \Big( t, \frac{x-h(t)}{\eta} \Big).
\]
We easily verify the divergence free condition \eqref{fsr4.1}. Moreover, with a change of variables, we also note that
\begin{equation}\label{g-nablag}
 \frac1\eta \Big\| g^\eta \Big\|_{L^\infty(0,T;L^2(\mathbb{R}^2))} + \Big\| \nabla g^\eta \Big\|_{L^\infty(0,T;L^2(\mathbb{R}^2))} 
 \leq C\|\tilde \varphi^\eta\|_{L^\infty(0,T;L^2 (A))}
\leq C \| \varphi \|_{L^\infty((0,T)\times \mathbb{R}^2)} 
\end{equation}
so we check that
\[
\frac1\eta \| \varphi^\eta -\varphi \|_{L^\infty(0,T;L^2(\mathbb{R}^2))} + \| \nabla \varphi^\eta-\nabla \varphi \|_{L^\infty(0,T;L^2(\mathbb{R}^2))} \leq C \| \varphi \|_{W^{1,\infty}((0,T)\times \mathbb{R}^2)} 
\]
which gives directly that $\varphi^\eta$ converges to $\varphi$ strongly in $L^\infty(0,T; L^2(\mathbb{R}^2))$ and weak-$*$ in $L^\infty(0,T; H^1(\mathbb{R}^2))$. By uniqueness of the limit, we do not need to extract a subsequence and we get the weak limit \eqref{fsr4.4}.

Now we compute
\begin{align*}
 \partial_{t} \varphi^\eta(t,x) -\partial_{t} \varphi(t,x) = &\partial_{t} \varphi(t,x) \Big( \chi \Big( \frac{x-h(t)}\eta\Big) -1\Big)- \frac{\varphi(t,x)}\eta h'(t) \cdot (\nabla \chi)\Big( \frac{x-h(t)}\eta\Big)\\
 &- \partial_{t} \tilde g^\eta \Big( t, \frac{x-h(t)}{\eta} \Big) 
+ h'(t) \cdot \nabla g^\eta ( t, x ) .
\end{align*}
It is obvious that the first right hand side term converges to zero strongly in $L^\infty(0,T;L^2(\mathbb{R}^2))$. It is also an easy computation to check that the second term is bounded in $L^q(0,T;L^2(\mathbb{R}^2))$ and tends to zero strongly in $L^q(0,T;L^p(\mathbb{R}^2))$ for $p\in [1,2)$. Hence, it converges weakly to zero in 
$L^q(0,T;L^2(\mathbb{R}^2))$. From \eqref{rev02} and \eqref{g-nablag}, we know that the last term is bounded in $L^q(0,T;L^2(\mathbb{R}^2))$, and as it converges to zero in $\mathcal{D}' ((0,T)\times \R^2)$, we infer that it converges also weakly to zero in $L^q(0,T;L^2(\mathbb{R}^2))$. Finally, we note that
\[
 \frac1\eta \Big\| \partial_t \tilde g^\eta \Big( t, \frac{x-h(t)}{\eta}\Big) \Big\|_{L^\infty(0,T;L^2(\mathbb{R}^2))} 
\leq C \| \partial_{t} \varphi \|_{L^\infty((0,T)\times \mathbb{R}^2)} ,
\]
hence the third right hand side term tends to zero strongly in $L^\infty(0,T;L^2(\mathbb{R}^2))$.
 This gives \eqref{fsr4.6}.

Due to the support of $\chi$ and $g^\eta$, it is clear that $\varphi^\eta \equiv 0$ on $B(h(t),\frac{\eta}{2})$. This ends the proof.
\end{proof}

\begin{Remark}\label{rm:varphi}
An important consequence is the approximation of any test function. Let $T>0$ and $\varphi \in C^\infty_c([0,T)\times \R^2)$ with $\div \varphi=0$. Then, we have constructed a family $(\varphi^\eta)_{\eta}$ of divergence free test functions which tends to $\varphi$ in the sense of \eqref{fsr4.4}-\eqref{fsr4.6}.
Moreover, for any $\eta>0$ fixed, we put together the strong convergence of $h^\e$ \eqref{rev:convh} with the support of $\varphi^\eta$ \eqref{fsr4.8} to deduce the existence of $\varepsilon_\eta>0$ such that
\begin{equation*}
\varphi^\eta \equiv 0 \quad \text{for }t\in 
[0,T), \quad x\in \mathcal{S}^\varepsilon(t), \quad \varepsilon\leq \varepsilon_{\eta}.
\end{equation*}
This implies that $\varphi^\eta$ is an admissible test function for the fluid-solid problem (see Definition~\ref{D01}).
\end{Remark}
\begin{Remark}
In the proof of the above proposition, we note that $H^1$ is the critical space in dimension two: $\chi(\frac\cdot\eta) -1 $ tends to zero strongly in $W^{1,p}$ for any $p\in [1,2)$, is bounded in $H^1$ (then tends weakly to zero), and goes to infinity in $W^{1,p}$ for $p>2$. This explains why the standard framework for shrinking obstacles problems is $H^1$ (see, e.g., \cite{Allaire90a,Allaire90b,CM82,IftKel2009a,IftLopNus2006a,Tartar80}). Nevertheless, as we need an estimate of the solid velocities, it is natural to look for a $C^0$ estimate of the velocity, hence a $H^s$ estimate for $s>1$. Unfortunately, the $H^2$ analysis developed in \cite{DasRob2011b,SilTak2014b} fails (see \cite{CPRXpreprint}).

In dimension three, the critical space for the cutoff argument is $W^{1,3}$ which is again not embedded in $C^0$.
\end{Remark}

\subsection{Passing to the limit in the Navier-Stokes equations}\label{sec:limit}

The first step is to pass to the limit $\varepsilon\to 0$ for $\eta$ fixed.

\begin{Theorem}\label{thmeta}
 Let $T>0$ and let $\varphi \in C^\infty_c([0,T)\times \R^2)$ with $\div \varphi=0$. 
 We consider the family $(\varphi^\eta)_{\eta>0}$ obtained in Proposition~\ref{lem:bogo}. Then, for any $\eta>0$, the limit $u$ of $u^\e$ (see \eqref{rev00}) verifies
\begin{equation*}
-\int_0^T \int_{\R^2} u \cdot \left(\frac{\partial \varphi^{\eta}}{\partial t} + (u \cdot \nabla)\varphi^{\eta} \right) \, dx 
+2\nu \int_0^T \int_{\R^2} D(u): D(\varphi^{\eta}) \, dx = \int_{\R^2} u^0(x) \cdot \varphi^{\eta}(0,x) \, dx.
\end{equation*}
\end{Theorem}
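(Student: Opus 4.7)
The plan is to use $\varphi^\eta$ as a test function in the weak formulation \eqref{fsr08} at level $\varepsilon$ and then to pass to the limit $\varepsilon\to 0$ term by term.

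Fix $\eta>0$. By Remark~\ref{rm:varphi} there exists $\varepsilon_\eta>0$ such that, for every $\varepsilon\leq\varepsilon_\eta$ and every $t\in[0,T)$, $\varphi^\eta$ vanishes on a neighborhood of $\mathcal{S}^\varepsilon(t)$; thus $\varphi^\eta(t,\cdot)\in\mathcal{V}_R(\mathcal{F}^\varepsilon(t))$ and $\rho^\varepsilon\equiv 1$ on $\supp\varphi^\eta$. Substituting $\varphi^\eta$ into \eqref{fsr08} and erasing the density yields
\[
-\int_0^T\!\int_{\R^2} u^\varepsilon\cdot\bigl(\partial_t\varphi^\eta+(u^\varepsilon\cdot\nabla)\varphi^\eta\bigr)\,dx\,ds+2\nu\int_0^T\!\int_{\R^2} D(u^\varepsilon):D(\varphi^\eta)\,dx\,ds=\int_{\R^2} u_0^\varepsilon\cdot\varphi^\eta(0,x)\,dx.
\]
The viscous integral passes to the limit by the weak-$\ast$ convergence \eqref{rev00} paired with $D(\varphi^\eta)\in L^\infty(0,T;L^2(\R^2))$, and the linear time-derivative integral by the same convergence paired with $\partial_t\varphi^\eta\in L^q(0,T;L^2(\R^2))$ (both from Proposition~\ref{lem:bogo}). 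The initial datum term passes to the limit thanks to \eqref{fsr28} and $\varphi^\eta(0,\cdot)\in L^2(\R^2)$.

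The only genuine obstacle is the quadratic term $\int_0^T\!\int_{\R^2}(u^\varepsilon\otimes u^\varepsilon):\nabla\varphi^\eta\,dx\,ds$, for which local strong compactness of $u^\varepsilon$ near the support of $\nabla\varphi^\eta$ is needed. By \eqref{fsr4.8} together with the uniform convergence $h^\varepsilon\to h$ of \eqref{rev:convh}, this support is contained, for all small $\varepsilon$, in a fixed open set $\mathcal{O}\Subset\R^2\setminus\{h(t):t\in[0,T]\}$ lying at positive distance from $\cup_t\mathcal{S}^\varepsilon(t)$; on $\mathcal{O}$ the solid is absent and $u^\varepsilon$ solves a standard Navier--Stokes system. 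The key compactness is produced via an Aubin--Lions argument on the Leray-projected field: the energy bound \eqref{est:energy} yields spatial regularity, while testing \eqref{eq01} against divergence-free vector fields compactly supported in $\mathcal{O}$ kills the pressure and the integral over $\partial\mathcal{S}^\varepsilon(t)$, giving a uniform bound on $\partial_t\mathbb{P}_\mathcal{O} u^\varepsilon$ in a negative Sobolev norm and hence the strong $L^2(0,T;L^4(\mathcal{O}))$ convergence of $\mathbb{P}_\mathcal{O} u^\varepsilon\to\mathbb{P}_\mathcal{O} u$ announced in the remark following Theorem~\ref{mainthm2}. This strong compactness on a region avoiding the shrinking solid is the main obstacle of the proof; once it is in hand, combining it with the weak $L^4_tL^4_x$ convergence of $u^\varepsilon$ (from \eqref{rev00} and the 2D Ladyzhenskaya embedding) and with the Helmholtz decomposition $u^\varepsilon=\mathbb{P}_\mathcal{O} u^\varepsilon+\nabla q^\varepsilon$ on $\mathcal{O}$ suffices to pass the quadratic integrand to the limit by standard manipulations, using that $\varphi^\eta$ is divergence-free and compactly supported in $\mathcal{O}$.
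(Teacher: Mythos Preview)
Your overall strategy---plug $\varphi^\eta$ into \eqref{fsr08}, pass the linear terms by the weak convergence \eqref{rev00}, and treat the quadratic term via Leray projection plus Aubin--Lions on a fixed spatial domain---is the same as the paper's. There is, however, a genuine gap in the localization step. You assert that the space--time support of $\varphi^\eta$ lies in $[0,T)\times\mathcal{O}$ for a \emph{single} time-independent open set $\mathcal{O}\Subset\R^2$ disjoint from $\bigcup_t\mathcal{S}^\varepsilon(t)$. This is false in general: by construction $\varphi^\eta(t,\cdot)$ vanishes only on $B(h(t),\eta/2)$, not on $B(h(s),\eta/2)$ for $s\neq t$, so if the trajectory $h$ drifts by more than $\eta$ over $[0,T]$ no such $\mathcal{O}$ exists (e.g.\ $\supp\varphi^\eta(0,\cdot)$ may well contain the point $h(T)$). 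Consequently your Aubin--Lions argument, which needs divergence-free test functions $\psi\in C_c^\infty((0,T)\times\mathcal{O})$ with $\mathcal{S}^\varepsilon(t)\cap\mathcal{O}=\emptyset$ for \emph{every} $t$ to be admissible in \eqref{fsr08}, cannot be carried out on a single cylinder.

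The paper fixes this by first subdividing $[0,T]$ into intervals $(t_j,t_{j+1})$ on which $h$ oscillates by at most $\eta/6$ (uniform continuity of $h$, from \eqref{rev02}), and choosing a different relatively compact set $\mathcal{O}_j$ on each subinterval so that $\supp\varphi^\eta\subset\bigcup_j(t_j,t_{j+1})\times\mathcal{O}_j$ and $\mathcal{S}^\varepsilon(t)\cap\mathcal{O}_j=\emptyset$ for $t\in(t_j,t_{j+1})$. The Helmholtz decomposition, the bound on $\partial_t\mathbb{P}_{\mathcal{O}_j}u^\varepsilon$ in $L^2(t_j,t_{j+1};\mathcal{V}(\mathcal{O}_j)')$, and Aubin--Lions are then run piecewise. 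Your ``standard manipulations'' for the quadratic term should also be made explicit: the key point is the algebraic identity $\int_{\mathcal{O}_j}(\nabla\tilde q\otimes\nabla\tilde q):\nabla\varphi^\eta=0$ for harmonic $\tilde q$ (coming from $\div(\nabla\tilde q\otimes\nabla\tilde q)=\tfrac12\nabla|\nabla\tilde q|^2$ and $\div\varphi^\eta=0$), which disposes of the only piece $\nabla q^\varepsilon\otimes\nabla q^\varepsilon$ for which no strong convergence is available.
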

\begin{proof}
Let $\eta>0$ be fixed. 
From \eqref{rev02} and Sobolev embeddings, we know that $t\mapsto h(t)$ is continuous on $[0,T]$ and then uniformly continuous. Hence, there exists a uniform subdivision $t_0=0<t_1<\ldots<t_{M+1}=T$ such that
for any $t\in (t_j, t_{j+1})$, 
$$
|h(t)-h(t_j)| \leq \frac{\eta}{6}.
$$
From \eqref{fsr4.8} in Proposition~\ref{lem:bogo}, we deduce 
$$
\varphi^{\eta} \equiv 0 \quad \text{in} \ (t_j, t_{j+1})\times B(h(t_j),\frac{\eta}{3}).
$$
Putting together this relation with \eqref{rev:convh}, there exist open relatively compact sets $\mathcal{O}_j$ and $\tilde \varepsilon_{\eta}>0$ such that for all $\varepsilon<\tilde\varepsilon_{\eta}$
\begin{equation}\label{fsr5.0}
\mathcal{S}^\varepsilon(t) \cap \mathcal{O}_j=\emptyset \quad \text{for all} \ t\in (t_j, t_{j+1})
\quad \text{and} \quad
\supp \varphi^{\eta} \subset \sum_{j=0}^M (t_j, t_{j+1}) \times \mathcal{O}_j.
\end{equation}

For any $j=0,\dots ,M$, we write the Helmholtz-Weyl decomposition
\begin{equation*}
u^{\varepsilon} = \mathbb{P}_{\mathcal{O}_j} u^{\varepsilon} + \nabla q^{\varepsilon},
\end{equation*}
where $\mathbb{P}_{\mathcal{O}_j}$ is the Leray projection on $\mathcal{H}(\mathcal{O}_{j})$ (see the introduction for the definition of $\mathcal{H}(\mathcal{O})$).
This projection is orthogonal in $L^2$ and by a standard estimate on the Laplace problem with Neumann boundary condition, there exists a constant $C_{\mathcal{O}_j}>0$ such that
\begin{equation*}
\| \mathbb{P}_{\mathcal{O}_j} u^{\varepsilon} \|_{L^2(\mathcal{O}_j)} \leq \| u^{\varepsilon} \|_{L^2(\mathcal{O}_j)}
\quad\text{and}\quad
\| \mathbb{P}_{\mathcal{O}_j} u^{\varepsilon} \|_{H^1(\mathcal{O}_j)} \leq C_{\mathcal{O}_j} \| u^{\varepsilon} \|_{H^1(\mathcal{O}_j)}.
\end{equation*}
Thus, by \eqref{est:energy},
\begin{equation*}
\left( \mathbb{P}_{\mathcal{O}_j} u^{\varepsilon} , \nabla q^{\varepsilon} \right)_{\varepsilon} \quad \text{is bounded in}\ L^\infty(0,T; L^2(\mathcal{O}_j))\cap L^2(0,T; H^1(\mathcal{O}_j)).
\end{equation*}
In particular,
\begin{gather}
 \mathbb{P}_{\mathcal{O}_j} u^{\varepsilon} \overset{*}{\rightharpoonup} \mathbb{P}_{\mathcal{O}_j} u \quad \text{in}\ L^\infty(0,T; L^2(\mathcal{O}_j))\cap L^2(0,T; H^1(\mathcal{O}_j)),\label{fsr7.3} \\
 \nabla q^{\varepsilon} \overset{*}{\rightharpoonup} \nabla q =u- \mathbb{P}_{\mathcal{O}_j} u \quad \text{in}\ L^\infty(0,T; L^2(\mathcal{O}_j))\cap L^2(0,T; H^1(\mathcal{O}_j)). \label{fsr7.4}
\end{gather}

Now we derive a time estimate for $ \mathbb{P}_{\mathcal{O}_j} u^{\varepsilon}$ in order to get a strong convergence. For any divergence free test function $\psi \in C^{\infty}_c((t_j,t_{j+1})\times \mathcal{O}_j)$, we have by \eqref{fsr5.0} that $\psi(t,\cdot) \in \mathcal{V}_R(\mathcal{F}^{\varepsilon}(t))$ (see \eqref{fsr09}), hence \eqref{fsr08} gives
\begin{align*}
\langle \partial_t \mathbb{P}_{\mathcal{O}_j} u^{\varepsilon},\psi \rangle_{L^2((t_j,t_{j+1});\mathcal{V}(\mathcal{O}_j)'),L^2((t_j,t_{j+1});\mathcal{V}(\mathcal{O}_j))} 
=& - \int_{t_j}^{t_{j+1}} \int_{\mathcal{O}_j} \mathbb{P}_{\mathcal{O}_j} u^{\varepsilon}\cdot \partial_t \psi \, dxdt
\\
=& - \int_{t_j}^{t_{j+1}} \int_{\mathcal{O}_j} u^{\varepsilon} \cdot \partial_t \psi \, dx dt\\
=&
\int_{t_j}^{t_{j+1}} \int_{\mathcal{O}_j} u^{\varepsilon}\cdot (u^{\varepsilon} \cdot \nabla)\psi \, dx dt
-2\nu \int_{t_j}^{t_{j+1}} \int_{\mathcal{O}_j} D(u^{\varepsilon}): D(\psi) \, dx dt.
\end{align*}
Thus, by using \eqref{est:energy} and the interpolation inequality $\|f\|_{L^4(\R^2)} \leq \|f\|_{L^2(\R^2)}^{1/2}\|\nabla f\|_{L^2(\R^2)}^{1/2}$, we get 
\begin{align*}
\Big|\langle \partial_t \mathbb{P}_{\mathcal{O}_j}& u^{\varepsilon},\psi \rangle_{L^2((t_j,t_{j+1});\mathcal{V}(\mathcal{O}_j)'),L^2((t_j,t_{j+1});\mathcal{V}(\mathcal{O}_j))}\Big|
\\
&\leq 
\| u^{\varepsilon} \|_{L^4((t_j,t_{j+1});L^4(\mathcal{O}_j))}^2 \| \psi \|_{L^2((t_j,t_{j+1});\mathcal{V}(\mathcal{O}_j))}
+\| D u^{\varepsilon} \|_{L^2((t_j,t_{j+1});L^2(\mathcal{O}_j))} \| \psi \|_{L^2((t_j,t_{j+1});\mathcal{V}(\mathcal{O}_j))}
\\
&\leq C \| \psi \|_{L^2((t_j,t_{j+1});\mathcal{V}(\mathcal{O}_j))}.
\end{align*}
Consequently, $\left(\partial_{t} \mathbb{P}_{\mathcal{O}_j} u^{\varepsilon} \right)_{\varepsilon}$ is bounded in $L^2((t_j, t_{j+1}); \mathcal{V}(\mathcal{O}_j)' )$, and the Aubin-Lions lemma in $H^{1} \cap \mathcal{H}(\mathcal{O}_{j}) \hookrightarrow L^4\cap \mathcal{H}(\mathcal{O}_{j}) \hookrightarrow \mathcal{V}'(\mathcal{O}_{j})$ allows us to extract a subsequence such that
\begin{equation}\label{fsr7.1}
 \mathbb{P}_{\mathcal{O}_j} u^{\varepsilon} \to \mathbb{P}_{\mathcal{O}_j} u \quad \text{strongly in}\ L^2((t_j, t_{j+1}); L^4(\mathcal{O}_j)).
\end{equation}
Actually, by the uniqueness of the limit, we do not need to extract a subsequence in \eqref{fsr7.1}.

These convergences are enough to pass to the limit in the Navier-Stokes equations. Indeed, for any $\varepsilon\in (0,\tilde\varepsilon_{\eta}]$, we know from \eqref{fsr5.0} that $\varphi^\eta$ is an admissible test function, and \eqref{fsr08} reads
\begin{multline*}
-\int_0^T \int_{\R^2} u^{\varepsilon} \cdot \frac{\partial \varphi^{\eta}}{\partial t} \, dx dt
-\sum_{j=0}^M \int_{(t_j, t_{j+1})\times \mathcal{O}_j} (u^{\varepsilon}\otimes u^{\varepsilon}) : \nabla\varphi^{\eta} \, dx dt
+2\nu \int_0^T \int_{\R^2} D(u^{\varepsilon}): D(\varphi^{\eta}) \, dx dt 
\\
= \int_{\R^2} u^{\varepsilon}_0(x) \cdot \varphi^{\eta}(0,x) \, dx.
\end{multline*}
Using the weak limits \eqref{rev00} and \eqref{fsr28}, we easily pass to the limit in the linear term
\begin{multline*}
-\int_0^T \int_{\R^2} u^{\varepsilon} \cdot \frac{\partial \varphi^{\eta}}{\partial t} \, dx dt
+2\nu \int_0^T \int_{\R^2} D(u^{\varepsilon}): D(\varphi^{\eta}) \, dx dt 
\\
\to
-\int_0^T \int_{\R^2} u \cdot \frac{\partial \varphi^{\eta}}{\partial t} \, dx dt
+2\nu \int_0^T \int_{\R^2} D(u): D(\varphi^{\eta}) \, dx dt
\end{multline*}
and
\begin{equation*}
 \int_{\R^2} u^{\varepsilon}_0(x) \cdot \varphi^{\eta}(0,x) \, dx
 \to \int_{\R^2} u_0(x) \cdot \varphi^{\eta}(0,x) \, dx.
\end{equation*}

For the non-linear term, we decompose in $(t_j, t_{j+1})\times \mathcal{O}_j$ as
$$
u^{\varepsilon}\otimes u^{\varepsilon} = \mathbb{P}_{\mathcal{O}_j} u^{\varepsilon} \otimes u^{\varepsilon} 
+\nabla q^{\varepsilon} \otimes \mathbb{P}_{\mathcal{O}_j} u^{\varepsilon}
+\nabla q^{\varepsilon} \otimes \nabla q^{\varepsilon}.
$$
Let us note that for any harmonic $\tilde q$ (i.e. $\Delta \tilde q=0$), we have the following relation:
\begin{equation}\label{algebra}
\int_{\mathcal{O}_j} ( \nabla\tilde q \otimes \nabla \tilde q) : \nabla \varphi^{\eta} 
= - \int_{\mathcal{O}_j} \div(\nabla \tilde q \otimes\nabla \tilde q) \cdot \varphi^{\eta} 
= -\int_{\mathcal{O}_j} \left( \frac{1}{2} \nabla |\nabla \tilde q |^2 \cdot \varphi^{\eta} + \Delta \tilde q \nabla \tilde q \cdot \varphi^{\eta} \right)= 0,
\end{equation}
because $\varphi^{\eta}$ is divergence free and compactly supported in $\mathcal{O}_j$.
From \eqref{fsr7.3}-\eqref{fsr7.1}, we have
$$
\int_{(t_j, t_{j+1})\times \mathcal{O}_j} (\mathbb{P}_{\mathcal{O}_j} u^{\varepsilon} \otimes u^{\varepsilon}) : \nabla\varphi^{\eta} \, dx dt
\to \int_{(t_j, t_{j+1})\times \mathcal{O}_j} (\mathbb{P}_{\mathcal{O}_j} u \otimes u) : \nabla\varphi^{\eta} \, dx dt,
$$
and
$$
\int_{(t_j, t_{j+1})\times \mathcal{O}_j} (\nabla q^{\varepsilon} \otimes \mathbb{P}_{\mathcal{O}_j} u^{\varepsilon}) : \nabla\varphi^{\eta} \, dx dt
\to \int_{(t_j, t_{j+1})\times \mathcal{O}_j} (\nabla q \otimes \mathbb{P}_{\mathcal{O}_j} u) : \nabla\varphi^{\eta} \, dx dt.
$$
Gathering the two above convergences and \eqref{algebra} applied to $q^\e$ and to $q$, we conclude
\begin{equation*}
\int_{(t_j, t_{j+1})\times \mathcal{O}_j} (u^{\varepsilon}\otimes u^{\varepsilon}) : \nabla\varphi^{\eta} \, dx dt
\to
\int_{(t_j, t_{j+1})\times \mathcal{O}_j} (u \otimes u ) : \nabla\varphi^{\eta} \, dx dt.
\end{equation*}

This ends the proof of Theorem~\ref{thmeta}.
\end{proof}

To end the proof of Theorem~\ref{mainthm2}, it is sufficient to pass to the limit $\eta\to 0$, thanks to Proposition~\ref{lem:bogo}.

\begin{proof}[Proof of Theorem~\ref{mainthm2}]
Let $T>0$ and $\varphi \in C^\infty_c([0,T)\times \R^2)$ with $\div \varphi=0$ fixed, then we consider $(\varphi^\eta)_{\eta\leq \eta_{1}}$ which approximate $\varphi$ (see Proposition~\ref{lem:bogo}) and $u$ a weak limit of $u^\varepsilon$ (see \eqref{rev00}). Theorem~\ref{thmeta} states that the limit $u$ verifies for any $\eta$
$$
-\int_0^T \int_{\R^2} u \cdot \left(\frac{\partial \varphi^{\eta}}{\partial t} + (u \cdot \nabla)\varphi^{\eta}\right) \, dx 
+2\nu \int_0^T \int_{\R^2} D(u): D(\varphi^{\eta}) \, dx = \int_{\R^2} u_0(x) \cdot \varphi^{\eta}(0,x) \, dx.
$$
As $u$ belongs to $L^\infty(0,T;L^2(\R^2))\cap L^2(0,T;H^1(\R^2))$, we deduce from the convergences \eqref{fsr4.4}-\eqref{fsr4.6} of $\varphi^\eta$ to $\varphi$ that
$$
-\int_0^T \int_{\R^2} u \cdot \left(\frac{\partial \varphi}{\partial t} + (u \cdot \nabla)\varphi \right) \, dx 
+2\nu \int_0^T \int_{\R^2} D(u): D(\varphi) \, dx = \int_{\R^2} u_0(x) \cdot \varphi(0,x) \, dx.
$$
By density, this equality is also true for any $\varphi\in C^1_c([0,T);\mathcal{V}(\R^2))$.
Noting that 
$$\int D(u):D(\varphi)=\frac12 \int \nabla u: \nabla \varphi + \frac12 \int \div u \div \varphi =\frac12 \int \nabla u: \nabla \varphi ,$$ 
we conclude that $u$ is a weak solution to the Navier-Stokes equations in $\R^2$ associated to $u_0$. By uniqueness of such a solution, we note that the weak convergence \eqref{rev00} holds for all sequence $(\varepsilon_n)$ converging to 0:
$$
u^{\varepsilon_n} \overset{*}{\rightharpoonup} u \quad \quad \text{in} \quad L^\infty(0,T;L^2(\R^2))\cap L^2(0,T;H^1(\R^2))
$$
as $n\to \infty$, without extracting a subsequence.

This ends the proof of Theorem~\ref{mainthm2}.
\end{proof}

\section{Final remarks and comments}\label{sec:final}

\subsection{The three dimensional case}

The main obstruction to the generalization to the three dimensional case is that the optimal decay estimates of the Stokes semigroup are not established for all time $t>0$.

For a solid with any shape moving in a three dimensional viscous fluid, Geissert, G\"otze and Hieber \cite[Theorem 4.1]{GGH} show the maximal regularity for the Stokes semigroup, locally in time. By an extension operator and Sobolev's embedding, we can deduce from their result the optimal $L^p-L^q$ estimates for some $p,q$ (see \cite[Proposition 3.1]{GHHS}). Even if we have to check that the $p,q$ reached are enough to perform the fixed point argument (as in Proposition~\ref{P01}), the problem is that these estimates are local in time, i.e. valid up to a time $T$, with the constants depending on $T$. Therefore, after the scaling argument (see the proof of Theorem~\ref{theo Stokes}), these estimates are independent of $\varepsilon$ only up to a time $T_{\varepsilon}=\varepsilon T$. Indeed, by the scaling property of the Navier-Stokes equations, the small obstacle problem is similar to the long-time behavior.

In dimension two, the global optimal $L^p-L^q$ decay estimates for the Stokes semigroup were obtained by Ervedoza, Hillairet and Lacave in \cite{EHL} when the solid is a disk in the whole plane. We guess that their analysis can be adapted in the exterior of a ball in dimension three, but it would require a considerable work, decomposing the Stokes equations on spherical coordinate system instead to polar decomposition, exhibiting the ``good unknown'' (see \cite[Proposition 2.3]{EHL} for the two dimensional case), and adapting the elliptic lemmas.

Finally, one should also adapt the fixed point argument performed in Proposition~\ref{P01}, which should not be too difficult, because the original proof of Kato \cite{Kato} holds for any dimension $n\geq 2$.

\subsection{Extension to massive pointwise particles}\label{sec:massive}

If some solids 
\begin{equation}\label{Si0}
\mathcal{S}^\varepsilon_{i,0} :=h_{i,0} + \varepsilon \mathcal{S}_{i,0},
\end{equation}
shrink to {\it massive pointwise} particle, i.e.
\begin{equation}\label{eq:mass}
 m^\varepsilon_{i} = m^1_{i}>0 \quad \text{and} \quad J^\varepsilon_{i}= \varepsilon^2 J^1_{i}>0,
\end{equation}
then the energy estimate gives directly the uniform estimates of the solid velocities. Therefore, we do not need here the analysis developed in Sections~\ref{sec:411}-\ref{estim_ell}, and we can prove the following result for any shape, and with several solids.

\begin{Theorem}
Let $N$ rigid bodies $\mathcal{S}^\varepsilon_i(t)$ of shape $\mathcal{S}_{i,0}$ \eqref{Si0} (with $\mathcal{S}_{i,0}$ smooth simply-connected compact subset of $\R^2$, with nonempty interior and where the center of mass of $\mathcal{S}_{i,0}$ is 0) inside a bounded domain $\Omega$, and where the positions $h_{i,0}\in \Omega$ and the size $\varepsilon$ are chosen such that
\begin{equation*}
 \mathcal{S}^\varepsilon_{i,0}\cap \mathcal{S}^\varepsilon_{j,0}= \emptyset \quad (i\neq j) \quad \text{and }\mathcal{S}^\varepsilon_{i,0} \subset \Omega.
\end{equation*}
Let $u^{\varepsilon}_0 \in \overline{\mathcal{V}_R(\mathcal{F}^{\varepsilon}_0)}^{L^2}$ such that
\begin{equation*}
u_0^{\varepsilon} \rightharpoonup u_0 \quad \text{in} \quad L^2(\Omega),
\end{equation*}
\begin{equation}\label{fsr8.4}
|\ell_{i,0}^{\varepsilon}|\leq C, \quad \varepsilon |r_{i,0}^{\varepsilon}|\leq C,\quad \forall i\in \{1,\dots,N\}.
\end{equation}
There exists a global weak solution $(u^{\varepsilon},\ h_i^\varepsilon, \ \theta_i^\varepsilon)$ see \cite{SanStaTuc2002a} (see also \cite{DesEst1999a}, \cite{GunLeeSer2000a}).
Then there exists $T>0$ such that
\begin{equation*}
u^{\varepsilon} \overset{*}{\rightharpoonup} u \quad \quad \text{ in } \quad L^\infty(0,T;L^2(\Omega))\cap L^2(0,T;H^1_{0}(\Omega)) 
\end{equation*}
where $u$ is the weak solution of the Navier-Stokes equations associated to $u_0$ in $\Omega$.
\end{Theorem}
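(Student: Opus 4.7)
The strategy is to combine the energy estimate (which now suffices for the solid velocities) with a multi-particle version of the cutoff construction of Section~\ref{sec:3}. The starting point is the global energy equality
\[
\frac12 \int_{\Omega} \rho^\varepsilon |u^\varepsilon(t)|^2 \, dx + 2\nu \int_0^t \int_{\Omega} |D(u^\varepsilon)|^2 \, dx \, ds = \frac12 \int_{\Omega} \rho^\varepsilon_0 |u_0^\varepsilon|^2 \, dx,
\]
which, since $\rho^\varepsilon$ is bounded below by a positive constant in $\mathcal{F}_0^\varepsilon$ and since $m_i^\varepsilon = m_i^1$ and $J_i^\varepsilon = \varepsilon^2 J_i^1$ are handled cleanly by the hypothesis \eqref{fsr8.4} on $\ell_{i,0}^\varepsilon$ and $r_{i,0}^\varepsilon$, yields
\[
\|u^\varepsilon\|_{L^\infty(0,T;L^2(\Omega))\cap L^2(0,T;H^1_0(\Omega))} + \sum_i \|\ell_i^\varepsilon\|_{L^\infty(0,T)} + \varepsilon \sum_i \|r_i^\varepsilon\|_{L^\infty(0,T)} \leq C,
\]
uniformly in $\varepsilon$. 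Using the uniform bound on $\ell_i^\varepsilon$, the trajectories $h_i^\varepsilon$ are uniformly Lipschitz, hence by Ascoli we extract a subsequence with $h_i^\varepsilon \to h_i$ in $C^0([0,T_0])$ for each $T_0>0$. Choosing $T>0$ small enough (depending only on $C$, on the initial separations $\min_{i\ne j}|h_{i,0}-h_{j,0}|$, and on $\dist(h_{i,0},\partial\Omega)$), we ensure that the limit points $h_i(t)$ remain mutually separated and bounded away from $\partial\Omega$ on $[0,T]$, and consequently the same holds uniformly for $h_i^\varepsilon(t)$ for $\varepsilon$ small.

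Next I would build an analogue of Proposition~\ref{lem:bogo} adapted to this multi-particle, bounded-domain setting. For each $i$, and each parameter $\eta>0$ smaller than half the minimal separation just obtained, one introduces a cutoff $\chi((x-h_i(t))/\eta)$ which vanishes near $h_i(t)$, and one corrects the divergence using a Bogovski\u\i{} operator on the annulus $B(h_i(t),\eta)\setminus\overline{B(h_i(t),\eta/2)}$ (whose shape is independent of $i,t,\eta$ after translation and scaling, so the operator norm is controlled by a universal constant). Summing the $N$ local corrections produces $\varphi^\eta \in W^{1,q}_c([0,T);H^1_0(\Omega))$ that is divergence free, vanishes in a neighborhood of each $h_i(t)$, converges weak-$*$ to $\varphi$ in $L^\infty(0,T;H^1_0(\Omega))$ with $\partial_t\varphi^\eta \rightharpoonup \partial_t\varphi$ in $L^q(0,T;L^2(\Omega))$. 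Combined with the uniform convergence of $h_i^\varepsilon$ to $h_i$, there exists $\varepsilon_\eta>0$ such that $\varphi^\eta$ is an admissible test function in the weak formulation for all $\varepsilon \leq \varepsilon_\eta$.

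The passage to the limit then mimics Section~\ref{sec:limit} almost verbatim. Outside a neighborhood of the particle trajectories, one writes $u^\varepsilon = \mathbb{P}_{\mathcal{O}_j} u^\varepsilon + \nabla q^\varepsilon$ on a suitable time-space partition $(t_j,t_{j+1})\times \mathcal{O}_j$ avoiding all $N$ solids; the Leray projection is strongly compact in $L^2((t_j,t_{j+1});L^4(\mathcal{O}_j))$ thanks to an Aubin--Lions argument applied to $\partial_t \mathbb{P}_{\mathcal{O}_j} u^\varepsilon$, while the gradient part $\nabla q^\varepsilon \otimes \nabla q^\varepsilon$ contributes zero when tested against the divergence-free, compactly supported $\varphi^\eta$. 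Passing $\varepsilon\to 0$ at $\eta$ fixed yields the weak formulation against $\varphi^\eta$; sending $\eta\to 0$ and using \eqref{fsr4.4}--\eqref{fsr4.6} finishes the proof. The main obstacle here, compared to Theorem~\ref{mainthm2}, is not the solid velocity estimate (which is free from the energy identity) but rather the bookkeeping needed to handle several trajectories simultaneously: one must ensure non-collision of the $h_i(t)$ on $[0,T]$ to construct $\varphi^\eta$, which is precisely what forces $T$ to be chosen small in terms of the initial configuration and the energy bound.
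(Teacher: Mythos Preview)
Your proposal is correct and follows essentially the same approach as the paper: the energy identity gives uniform bounds (including on $\ell_i^\varepsilon$, thanks to the fixed masses $m_i^\varepsilon=m_i^1$), Ascoli for the trajectories, a multi-particle Bogovski\u\i{} cutoff as in Proposition~\ref{lem:bogo}, and then the argument of Section~\ref{sec:limit} verbatim.

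Two minor points where the paper is a little sharper. First, since here each $h_i$ is Lipschitz (not merely $W^{1,q}$ as in the massless case), the approximate test function actually lies in $W^{1,\infty}_c([0,T);H^1_0(\Omega))$ and one gets $\partial_t\varphi^\eta \overset{*}{\rightharpoonup} \partial_t\varphi$ in $L^\infty(0,T;L^2(\Omega))$; your $W^{1,q}$ statement is correct but weaker than what the situation allows. Second, the paper observes that a positive distance between the solids and $\partial\Omega$ is \emph{not} needed: since $\varphi\in C^\infty_c([0,T)\times\Omega)$ has support at positive distance from $\partial\Omega$, a trajectory $h_i(t)$ approaching the boundary simply exits $\supp\varphi$ and requires no cutoff there. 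Only the mutual separation $|h_i(t)-h_j(t)|>0$ matters for constructing $\varphi^\eta$; in particular, for $N=1$ one may take $T$ arbitrarily large. Your additional constraint on $\dist(h_i(t),\partial\Omega)$ is harmless for the stated theorem but unnecessarily restricts $T$.
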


\begin{Remark}
To adapt Proposition~\ref{lem:bogo}, we need the existence of a positive distance between the rigid bodies, independent of $\varepsilon$. Therefore, denoting by $h_{i}$ the limit of $h_{i}^\varepsilon$, the time $T$ in the above theorem corresponds to a time such that 
$$
| h_i(t)-h_j(t)| > 0 \quad (i\neq j, \quad t\in [0,T]).
$$
The existence of such a $T$ is ensured by \eqref{fsr8.4}. 
\end{Remark}
Namely, we can prove the following:
\begin{Proposition}
Let $T>0$, $\varphi \in C^\infty_c([0,T)\times \Omega)$ with $\div \varphi=0$ and consider $\eta_{1}>0$ such that
\begin{equation*}
 | h_{i}(t)-h_j(t) | \geq 2\eta_{1} \quad \text{for all } t \in [0,T] \text{ and } i \neq j
\end{equation*}
and
\begin{equation*}
\dist(\supp \varphi(t,\cdot), \partial \Omega) \geq 2\eta_{1} \quad \text{for all } t \in [0,T] .
\end{equation*}
For any $\eta\leq \eta_{1}$ there exists $\varphi^\eta \in W^{1,\infty}_c([0,T);H^1_0(\Omega))$ satisfying
\begin{equation*}
\div \varphi^\eta =0 \quad \text{in} \ [0,T)\times \Omega,
\end{equation*}
\begin{equation*}
\varphi^\eta \equiv 0 \quad t\in (0,T), \quad x\in B\left(h_i(t),\frac{\eta}{2}\right),
\end{equation*}
\begin{equation*}
 \varphi^\eta \overset{*}{\rightharpoonup} \varphi \quad L^\infty(0,T;H^1(\Omega)),
\end{equation*}
\begin{equation*}
 \partial_t \varphi^\eta \overset{*}{\rightharpoonup} \partial_t \varphi \quad L^\infty(0,T;L^2(\Omega)).
\end{equation*}
\end{Proposition}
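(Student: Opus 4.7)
The plan is to adapt the single-solid construction of Proposition~\ref{lem:bogo} to $N$ pointwise particles by localizing independently around each limit center $h_i(t)$. Since $|h_i(t)-h_j(t)|\geq 2\eta_1$ for $i\neq j$ and $\eta \leq \eta_1$, the balls $B(h_i(t),\eta)$ are pairwise disjoint for every $t\in[0,T]$, so the local corrections never interact. Moreover, in the massive setting \eqref{eq:mass}, the energy estimate combined with \eqref{fsr8.4} yields a uniform $L^\infty(0,T)$ bound on $(h_i^\varepsilon)'$, so each limit $h_i$ lies in $W^{1,\infty}(0,T)$. This is the improvement over the $W^{1,q}$ regularity of Section~\ref{sec:conv} that allows the upgrade of the time-derivative convergence from weak $L^q$ to weak-$*$ $L^\infty$.

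With the standard cutoff $\chi \in C^\infty(\R^2,[0,1])$ satisfying $\chi\equiv 0$ on $B(0,1/2)$ and $\chi\equiv 1$ on $B(0,1)^c$, and $A = B(0,1)\setminus\overline{B(0,1/2)}$, I set
\[
\tilde\varphi_i^\eta(t,y) := \varphi(t, h_i(t)+\eta y)\cdot\nabla\chi(y).
\]
The same divergence-theorem computation as in the proof of Proposition~\ref{lem:bogo} gives $\int_A \tilde\varphi_i^\eta(t,y)\,dy = 0$ for every $t$, so the Bogovski\u\i\ operator on $A$ yields $\tilde g_i^\eta \in W^{1,\infty}(0,T;H^1_0(A))$ with $\div_y \tilde g_i^\eta = \tilde\varphi_i^\eta$. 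After rescaling $g_i^\eta(t,x) := \tilde g_i^\eta(t,(x-h_i(t))/\eta)$, I define
\[
\varphi^\eta(t,x) := \varphi(t,x)\prod_{i=1}^N \chi\Bigl(\tfrac{x-h_i(t)}{\eta}\Bigr) - \sum_{i=1}^N g_i^\eta(t,x).
\]

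For the verification, the key observation is that the support of $\nabla\chi((\cdot-h_i(t))/\eta)$ sits in $B(h_i(t),\eta)\setminus B(h_i(t),\eta/2)$, where every other factor $\chi((\cdot-h_j(t))/\eta)$ equals $1$ by disjointness of the balls. Expanding $\div\varphi^\eta$ therefore yields $N$ independent terms, each killed by the matching $\div g_i^\eta$. Vanishing on $B(h_i(t),\eta/2)$ is immediate from the support of $\chi$ and of $\tilde g_i^\eta$. For the boundary condition $\varphi^\eta \in H^1_0(\Omega)$, note that either $B(h_i(t),\eta)\subset\Omega$, in which case $g_i^\eta$ is compactly supported in $\Omega$, or $h_i(t)$ lies within $\eta_1$ of $\partial\Omega$, in which case $B(h_i(t),\eta)$ is disjoint from $\supp\varphi(t,\cdot)$ by the $2\eta_1$-separation hypothesis and $\tilde\varphi_i^\eta=g_i^\eta=0$; in both cases $\varphi^\eta$ stays compactly supported in $\Omega$.

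The weak convergences are then proved exactly as in Proposition~\ref{lem:bogo}, using the rescaling estimates on $g_i^\eta$ and the fact that $\chi((\cdot-h_i(t))/\eta)-1$ tends to zero strongly in $L^\infty(0,T;W^{1,p})$ for $p<2$ and weakly in $L^\infty(0,T;H^1)$. The subtle term in $\partial_t\varphi^\eta-\partial_t\varphi$ is
\[
-\frac{1}{\eta}\sum_i \varphi(t,x)\,h_i'(t)\cdot(\nabla\chi)\Bigl(\tfrac{x-h_i(t)}{\eta}\Bigr),
\]
which has $L^2_x$-norm uniformly bounded in $t$ (its support has Lebesgue measure $O(\eta^2)$ and the sup-norm blows up like $\eta^{-1}$) and tends to $0$ strongly in $L^\infty(0,T;L^p(\Omega))$ for $p<2$, hence weakly-$*$ in $L^\infty(0,T;L^2(\Omega))$. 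The other contributions to $\partial_t\varphi^\eta-\partial_t\varphi$ converge strongly in $L^\infty(0,T;L^2)$ via the Bogovski\u\i\ bounds. The main technical hinge is precisely this last argument: it requires the $L^\infty_t$ control of $h_i'$ that was unavailable in the massless case and is the reason the time regularity of $\varphi^\eta$ can be lifted from $W^{1,q}$ to $W^{1,\infty}$.
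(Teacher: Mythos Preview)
Your proposal is correct and matches the paper's intended approach: the paper does not actually prove this proposition, but states it immediately after noting that the massive case gives $h_i\in W^{1,\infty}(0,T)$ via the energy estimate, with the understanding that one adapts the proof of Proposition~\ref{lem:bogo}. Your write-up is a faithful and more explicit version of that adaptation, including the correct identification of why the time regularity upgrades from $W^{1,q}$ to $W^{1,\infty}$.

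One small imprecision: among the ``other contributions'' to $\partial_t\varphi^\eta-\partial_t\varphi$ there is the term $(h_i'(t)\cdot\nabla_x)g_i^\eta$ coming from the chain rule on $g_i^\eta(t,x)=\tilde g_i^\eta(t,(x-h_i(t))/\eta)$. This term is only \emph{bounded} in $L^\infty(0,T;L^2(\Omega))$ (since $\|\nabla g_i^\eta\|_{L^2}$ is merely bounded, not small), and converges to zero only weakly-$*$, not strongly; this is exactly parallel to the treatment of the corresponding term in the proof of Proposition~\ref{lem:bogo}. It does not affect your conclusion, but the sentence ``the other contributions \dots\ converge strongly'' should be softened accordingly.
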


 In contrast, we do not need a positive distance between the bodies and the exterior boundary $\partial\Omega$. In particular, in the case of a single rigid body (i.e. $N=1$), we can take $T$ arbitrary large.

The rest of the proof can be done following Section~\ref{sec:limit}.

\bigskip

\noindent
{\bf Acknowledgements.} We want to warmly thank Isabelle Gallagher, David G\'erard-Varet, Matthieu Hillairet and Franck Sueur for several fruitful discussions.

We are partially supported by the Agence Nationale de la Recherche, Project IFSMACS, grant ANR-15-CE40-0010.
C.L. is partially supported by the Agence Nationale de la Recherche, Project DYFICOLTI, grant ANR-13-BS01-0003-01, and by the project \emph{Instabilities in Hydrodynamics} funded by the Paris city hall (program \emph{Emergences}) and the Fondation Sciences Math\'ematiques de Paris.

The authors are also grateful to the anonymous referees for their valuable comments on the first version of this article.


\adrese

\end{document}